\newtheorem{theorem}{Theorem}[section]
\newtheorem{proposition}{Proposition}[section]
\newtheorem{RHP}{RHP}[section]
\DeclareMathOperator*{\im}{Im}
\DeclareMathOperator*{\re}{Re}
\numberwithin{equation}{section}
\begin{document}
		
	\baselineskip=16pt
	
	\title{ The complex  mKdV equation with step-like  initial data: Large time  asymptotic analysis    }
	\author{Zhaoyu Wang,  Kai Xu and   Engui Fan\thanks{Corresponding author: faneg@fudan.edu.cn}\\[4pt]
		School of Mathematical Science, Fudan University,\\
		Shanghai 200433, P.R. China}
	\maketitle

	\begin{abstract}
	  In this paper, we study large-time asymptotics  for the complex modified Korteveg-de Vries equation
\begin{equation}
u_t + \frac{1}{2}u_{xxx}+3|u|^2 u_x=0,
\end{equation}
with the step-like initial data
\begin{equation}
u(x,0)=u_0(x)=
\begin{cases}
0,  & {x \ge 0,}\\
A e^{iBx},   &{x < 0.}
\end{cases}
\end{equation}
It is shown that the  step-like initial problem can be described by a matrix  Riemann-Hilbert problem.
 We apply the steepest descent method to obtain different  large-time asymptotics in the
 the Zakharov-Manakov region, a  plane wave region and a slow decay region.
	\\[5pt]
\noindent {\bf Keywords}:  complex mKdV equation, step-like initial value problem, Large-time asymtotics, Riemann-Hilbert problem,  steepest descent method.

	\end{abstract}

\newpage

\tableofcontents%

\section{Introduction}
Initial value problems for nonlinear integrable equations with the step-like initial data have aroused widespread concern since 1970s \cite{C1}-\cite{C4}. This paper focuses on the long-term behavior of solutions to such problems.   With the development of the theory of Whitham deformations \cite{C5} and the analysis of  matrix Riemann-Hilbert problem (RHP) representations of solutions of initial value problems \cite{C6}-\cite{C8}, the corresponding research  achieved a considerable progress.
 Bikbaev discussed the case of the focusing nonlinear Schr$\ddot{\text{o}}$dinger (NLS) equation, where a much more complicated complex form of the theory of Whitham deformations was required \cite{C7}.

Deift and Zhou introduced the steepest descent method to deal with the asymptotics of solutions of integrable nonlinear equations \cite{C9}. Firstly, this method was widely applied for studying initial value problems with the  decaying initial data. Recently, Buckingham and Venakides used this approach to solve the problem with shock-type oscillating initial data by introducing the $g$-function.
  The purpose of introducing the $g$-function  is to transform  the original RHP into a form that can be treated asymptotically with the help of the related singular integral equations \cite{C10}. Moreover,  initial problems with non-zero boundary conditions were considered in references \cite{N1}-\cite{N3} for the defocusing NLS equation and in references \cite{N4}-\cite{N6} for the focusing NLS equation.

In this paper, we study the Cauchy problem for the complex modified Korteveg-de Vries(mKdV) equation
\begin{equation}
u_t + \frac{1}{2}u_{xxx}+3|u|^2 u_x=0, \label{cmkdv}
\end{equation}
with the step-like initial data
\begin{equation}\label{sl}
u(x,0)=u_0(x)=
\begin{cases}
0,  & {x \ge 0,}\\
A e^{iBx},   &{x < 0,}
\end{cases}
\end{equation}
where  $A>0$ and $B\in \mathbb{R}$ are real numbers.
The complex mKdV equation is a typical integrable partial differential equation, which is applicable to many physical phenomena, such as the propagation of  transverse waves in molecular chain model \cite{A1},
the transmission of electromagnetic waves in liquid crystal waveguide \cite{A2}-\cite{A4},  plane wave propagation in weakly nonlinear micropylar solid \cite{A5}, and so on.

The complex mKdV  equation is closely related to the cubic NLS equation:
\begin{equation}
iu_t + u_{xxx}+2|u|^2 u=0.
\end{equation}
Thus, as a member of the NLS hierarchy, the complex mKdV can be studied by the Darboux transformation, the Hirota method, the inverse scattering transformation(IST), the RHP, the steepest descent method, and so on.
We list some studies on the complex mKdV equation which are closed to our consideration.
The soliton solutions of the complex mKdV equation were obtained by tanh and sine-cosine method in \cite{M1} and \cite{M2}, the rogue wave solutions were derived in \cite{M3} and \cite{M4}, and the breathers and super-regular breathers were considered in \cite{M5} and \cite{M6}.
\cite{M7} and \cite{M8} discussed the existence and stability of solitary wave solutions and periodic traveling wave solutions of the complex mKdV equation. Conservation laws and exact group invariant solutions were constructed in \cite{M9}. Additionally, the discrete and nonlocal complex mKdV equations were given in \cite{M10} and \cite{M11} respectively, in which discrete solutions and nonlocal solitons were discussed.

Recently,  Zhang et al solved the complex mKdV equation when the reflection coefficient has multiple higher-order poles,  using the standard IST to establish an appropriate RHP with the zero boundary condition \cite{CT1}
\begin{equation*}
   u(x,0) = u_0(x) \to 0, \quad \text{as}\; x \to \infty.
\end{equation*}
Here  we discuss the large-time asymptotics of the complex mKdV equation with the non-zero boundary condition
\begin{equation*}
  u(x,0) = u_0(x) = A e^{iBx}  \nrightarrow 0, \quad x <0,
\end{equation*}
 which is different from the previous consideration.
The asymptotic behavior of the solution of the initial-value problem for the mKdV equation with the pure step-like initial data
\begin{equation*}
  u(x,0)= \begin{cases}
   0,  & {x \ge 0,}\\
   c,   &{x < 0,}
  \end{cases}
\end{equation*}
was studied in \cite{VA1}.
According to the sign of the function $\im \theta$ and the decay property of the jump matrix, they  divided the $(x,t)$ half-plane into three different regions, which included two zero-genus regions and one one-genus region.
Compared to the results of the  mKdV equation, we have an additional Zakharov-Manakov (ZM) region with zero genus, so there are three zero-genus regions for the complex mKdV equation.
Of particular interest to us is the work \cite{CT2}, where Monvel derived  the long-time asymptotic formulas of the focusing NLS equation with the step-like initial data under the form
\begin{equation*}
u(x,0)= \begin{cases}
0,  & {x \le 0,}\\
A e^{-2iBx},   &{x > 0.}
\end{cases}
\end{equation*}
They showed that there are three regions in the $(x,t)$ half-plane with two zero-genus regions and one one-genus region.
Compared to the results of the  focusing NLS equation,  our result has another slow decay region with zero genus.

Additionally, we find the complex mKdV equation (\ref{cmkdv}) has a plane wave solution
\begin{equation}\label{up}
u^p(x,t)=A e^{i(Bx+Ct)},
\end{equation}
with $C:=B^3/2-3A^2B$, which is consistent with $u^p(x,0)=u_0(x)$ for $x<0$. Then, we assume that the solution $u(x,t)$ of the initial problem (\ref{cmkdv})-(\ref{sl}) has the asymptotic behavior as follows
\begin{align}
u(x,t)&=o(1),\quad x \to +\infty, \\
u(x,t)&=u^p(x,t)+o(1),\quad x \to -\infty,
\end{align}
for any $t>0$.

The structure of this work is as follows.  In Section 2, we  give  results of
the spectral analysis of the Lax pair and  some important  properties of the scattering data and the reflection coefficient.
In Section 3, we formulate an RHP for  $M(z)$ to characterize the Cauchy problem  (\ref{cmkdv}) with the step-like initial data (\ref{sl}) and give our main theorem.
In Section 4,    we  focus on the long-time asymptotic analysis of this RHP under different regions, which are classified by the relation between $x$ and $t$.

 \section{Spectral analysis on  the Lax pair}
It is well-known that the Lax pair  of the complex mKdV equation can be given by
\begin{equation}\label{lax1}
    \begin{cases}
    \Phi_x+ik\sigma_3 = U \Phi,\\
    \Phi_t+2ik^3\sigma_3= V \Phi,
    \end{cases}
\end{equation}
with

  \begin{align*}
  \sigma_3&:=\left(\begin{array}{cc}
  1 & 0\\
  0 & -1
  \end{array}\right), \quad U:=\left(\begin{array}{cc}
  0 & u(x,t)\\
  -\bar{u}(x,t) & 0
  \end{array}\right),\\
  V&= -2kU^2\sigma_3 +\frac{1}{2} \left(U_x U - U U_x\right) \sigma_3 + 2k^2 U +ik\sigma_3 U_x -\frac{1}{2}U_{xx}+U^3,
  \end{align*}
where $k$ is a spectral parameter.

The corresponding Lax pair for  the plane wave solution (\ref{up}) is given by
\begin{equation}
\begin{cases}
\Phi^p_x+ik\sigma_3 = U^p \Phi^p,\\
\Phi^p_t+2ik^3\sigma_3= V^p \Phi^p,
\end{cases}
\end{equation}
with
\begin{align}
U^p&=\left(\begin{array}{cc}
0 &  A e^{i(Bx+Ct)}\\
-A e^{-i(Bx+Ct)} & 0
\end{array}\right),\\
V^p&=-2k(U^p)^2\sigma_3 +\frac{1}{2} \left(U^p_x U^p - U^p U^p_x\right) \sigma_3 + 2k^2 U^p +ik\sigma_3 U^p_x -\frac{1}{2}U^p_{xx}+(U^p)^3.
\end{align}
Then, we choose a particular solution $\Phi^p(k;x,t)$ in the form
\begin{equation}
\Phi^p(k;x,t)= e^{i(Bx+Ct)\sigma_3/2} E_0(k) e^{-i(xX(k)+t\Omega(k))\sigma_3},
\end{equation}
where
\begin{align}
E_0(k)=\frac{1}{2} \left(\begin{array}{cc}
\varphi(k)+\frac{1}{ \varphi(k)} &  \varphi(k)-\frac{1}{ \varphi(k)}\\
\varphi(k)-\frac{1}{ \varphi(k)} & \varphi(k)+\frac{1}{ \varphi(k)}
\end{array}\right), \quad \varphi(k)=\left( \frac{k-iA+\frac{B}{2}}{k+iA+\frac{B}{2}}\right)^{\frac{1}{4}},\label{var}\\
X(k)=\sqrt{\left(k+\frac{B}{2}\right)^2+A^2}, \quad \Omega(k)=\left(2k^2-Bk+\frac{1}{2}B^2-A^2\right) X(k).\label{XO}
\end{align}
In addition,  the functions $X(k)$ and $\Omega(k)$ are analytic in the region $\mathbb{C} 	\backslash [-B/2+iA, -B/2-iA ]$.
We have the asymptotics of $X(k)$ and $\Omega(k)$: as $k \to \infty$,
\begin{align}
X(k)=k+\frac{B}{2}+\mathcal{O}\left(\frac{1}{k}\right),\; \Omega(k)=2k^3+\frac{B^3-6A^2B}{4}-\frac{3(A^4-A^2B^2)}{4k}+\mathcal{O}\left(\frac{1}{k^2}\right).
\end{align}

Define the following Jost solutions of the Lax equations (\ref{lax1}) for $\im k=0$,
\begin{align}\label{jost1}
 \Phi(k;x,t)&=e^{-i(kx+2k^3t)\sigma_3} + o(1), \quad x\to +\infty,\\
 \Psi(k;x,t)&=\Phi^p(k;x,t)+ o(1), \quad x\to -\infty.\label{jost2}
\end{align}
The solutions of (\ref{jost1}) and (\ref{jost2}) can be represented in the following form
\begin{align}\label{phi1}
    \Phi(k;x,t)&=e^{-i(kx+2k^3t)\sigma_3}+ \int_{+\infty}^{x} e^{ik(x'-x)\sigma_3} U(x',t)  \Phi(k;x',t) \mathrm{d} x',\\
    \Psi(k;x,t)&=\Phi^p(k;x,t)+ \int_{-\infty}^{x} \Phi^p(k;x,t)\left(\Phi^p\right)^{-1}(k;x',t) \left(U(x',t)-U^-(x',t) \right) \Psi(k;x',t) \mathrm{d} x'. \label{psi1}
\end{align}
For convenience, denote $E=-B/2+iA$, then $\bar{E}=-B/2-iA$.
\begin{proposition} \label{pro1}
	The solutions $\Phi(k;x,t)$ and $\Psi(k;x,t)$ have the following properties:
\begin{itemize}
	\item[(1)]  Determinant:
	\begin{equation*}
	\det \Phi(k;x,t)=1,\quad \det  \Psi(k;x,t)=1.
	\end{equation*}
	\item[(2)] Analyticity: \\
	$\Phi_1(k;x,t)$ is analytic in $\mathbb{C}^-$ and continuous up to the boundary,\\
	 $\Phi_2(k;x,t)$ is analytic in $\mathbb{C}^+$ and continuous up to the boundary,\\
	$\Psi_1(k;x,t)$ is analytic in $\mathbb{C}^+	\backslash [E, \bar{E} ]$ and continuous up to the boundary,\\
	$\Psi_2(k;x,t)$ is analytic in $\mathbb{C}^-	\backslash [E, \bar{E}]$ and continuous up to the boundary.
	\item[(3)] Symmetry:
	\begin{equation}
	    \Phi(k;x,t)=\sigma_2  \overline{\Phi(\bar{k};x,t)} \sigma_2,\quad \Psi(k;x,t)=\sigma_2  \overline{\Psi(\bar{k};x,t)} \sigma_2,
	    	\end{equation}
	    where $\sigma_2=\left(\begin{array}{cc}
	    0 & i\\
	    -i& 0
	    \end{array}\right)$.
 	\item[(4)] Large $k$ asymptotics:
 \begin{align*}
 &\Phi_1(k;x,t) e^{i(kx+2k^3t)}= \left(\begin{array}{cc}
 1\\
  0
 \end{array}\right)+ \mathcal{O}\left(\frac{1}{k}\right),\quad k \to \infty, \quad \im k \le 0,\\
 &\Phi_2(k;x,t) e^{-i(kx+2k^3t)}= \left(\begin{array}{cc}
 0\\
 1
 \end{array}\right)+ \mathcal{O}\left(\frac{1}{k}\right),\quad k \to \infty, \quad \im k \ge 0,\\
  &\Psi_1(k;x,t) e^{i(kx+2k^3t)}= \left(\begin{array}{cc}
 1\\
 0
 \end{array}\right)+ \mathcal{O}\left(\frac{1}{k}\right),\quad k \to \infty, \quad \im k \ge 0,\\
 &\Psi_2(k;x,t) e^{-i(kx+2k^3t)}= \left(\begin{array}{cc}
 0\\
 1
 \end{array}\right)+ \mathcal{O}\left(\frac{1}{k}\right),\quad k \to \infty, \quad \im k \le 0.
 \end{align*}
\end{itemize}
\end{proposition}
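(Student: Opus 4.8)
The plan is to derive all four statements from the Volterra integral equations \eqref{phi1} and \eqref{psi1}, after recasting them in terms of renormalized Jost functions. For $\Phi$ I would set $m(k;x,t)=\Phi(k;x,t)e^{i(kx+2k^3t)\sigma_3}$; substituting into \eqref{phi1} shows that $m$ solves a Volterra equation $m(x)=I+\int_{+\infty}^{x}e^{-ik(x-x')\hat\sigma_3}\,[U(x')m(x')]\,\mathrm{d}x'$, where $e^{\alpha\hat\sigma_3}$ denotes the conjugation $B\mapsto e^{\alpha\sigma_3}Be^{-\alpha\sigma_3}$, and $m\to I$ as $x\to+\infty$. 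Because $U$ is off-diagonal, the first-column system decouples into $\partial_x m_{11}=u\,m_{21}$ and $\partial_x m_{21}=2ik\,m_{21}-\bar u\,m_{11}$, whose integral form carries the factor $e^{2ik(x-x')}$ with $x'\ge x$; this factor is bounded and analytic precisely for $\im k\le 0$, which gives analyticity of $\Phi_1$ in $\mathbb{C}^-$, with continuity up to $\mathbb{R}$ coming from uniform convergence of the Neumann series (guaranteed by the decay of $u$ as $x\to+\infty$). The mirror computation for the second column yields analyticity of $\Phi_2$ in $\mathbb{C}^+$.

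The asymptotics in item (4) fall out of the same Neumann series: the leading term is $I$ and each correction is $\mathcal{O}(1/k)$, obtained by integrating by parts in the oscillatory off-diagonal integrals, and reinstating the exponential gives the stated limits. The determinant identities follow from Abel's formula: since $\Phi$ solves $\Phi_x=(-ik\sigma_3+U)\Phi$ and $\operatorname{tr}(-ik\sigma_3+U)=0$, the quantity $\det\Phi$ is independent of $x$, and evaluating as $x\to+\infty$ gives $\det\Phi\equiv1$. The symmetry is checked by verifying that $\tilde\Phi(k;x,t):=\sigma_2\overline{\Phi(\bar k;x,t)}\sigma_2$ satisfies the same equation and boundary condition as $\Phi$: using $\sigma_2\sigma_3\sigma_2=-\sigma_3$ together with the identity $\sigma_2\overline{U}\sigma_2=U$, one finds $\tilde\Phi_x=(-ik\sigma_3+U)\tilde\Phi$ with $\tilde\Phi\to e^{-i(kx+2k^3t)\sigma_3}$, so uniqueness of the Jost solution forces $\tilde\Phi=\Phi$.

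For $\Psi$ the same four arguments apply, but the background solution $\Phi^p$ now replaces the bare exponential. The determinant reduces to $\det\Psi\equiv\det\Phi^p=\det E_0(k)=\tfrac14\bigl[(\varphi+\varphi^{-1})^2-(\varphi-\varphi^{-1})^2\bigr]=1$, and the symmetry follows once one records the corresponding symmetries of $E_0,\varphi,X,\Omega$ inherited from the off-diagonal structure of $U^p$. The decisive difference, and what I expect to be the main obstacle, is the analyticity of $\Psi_1,\Psi_2$ in the cut half-planes $\mathbb{C}^{\pm}\setminus[E,\bar E]$. Here the kernel of \eqref{psi1} equals $e^{i(Bx+Ct)\sigma_3/2}E_0(k)\,e^{-i(x-x')X(k)\sigma_3}\,E_0^{-1}(k)\,e^{-i(Bx'+Ct)\sigma_3/2}$, in which the only $k$-dependent exponential growth comes from $e^{-i(x-x')X(k)\sigma_3}$. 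One must show two things simultaneously: that $\im X(k)$ has the same sign as $\im k$, so that with $x'\le x$ the relevant entry is bounded in the correct half-plane, using $X(k)=\sqrt{(k+B/2)^2+A^2}\sim k+B/2$ together with the fact that $X>0$ on $\mathbb{R}$; and that the branch points of $\varphi$ and $X$ at $E,\bar E$ force the segment $[E,\bar E]$ to be excised from the domain of analyticity. Tracking these exponential factors through the $E_0$-conjugation, rather than the routine Volterra estimate, is the delicate step; once it is in place, convergence of the Neumann series follows from $U-U^p\to0$ as $x\to-\infty$, and the large-$k$ expansions of $X,\Omega$ yield item (4) for $\Psi$.
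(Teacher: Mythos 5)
Your proposal is correct and follows essentially the same route as the paper, which for this proposition gives no argument of its own but defers to the standard treatment in \cite{xu}: renormalized Jost functions and Volterra/Neumann-series estimates for analyticity and continuity, Abel's formula (trace-free coefficient matrix) for the determinants, uniqueness of the Jost solution plus $\sigma_2\overline{U}\sigma_2=U$ for the symmetry, and the leading term of the Neumann series for the large-$k$ asymptotics. Your identification of the delicate point for $\Psi$ --- tracking the sign of $\im X(k)$ through the $E_0$-conjugation and the branch cut $[E,\bar E]$ --- is exactly where the cited argument differs from the zero-background case, so the sketch is faithful to the intended proof.
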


Since the eigenfunctions $\Phi(k;x,t)$ and $\Psi(k;x,t)$  are the solutions of the Lax pair (\ref{lax1}), there exists a matrix $S(k)$ which is independent of $x$ and $t$
\begin{equation}\label{S}
    S(k)= \left(\Phi(k;x,t)\right)^{-1} \Psi(k;x,t).
\end{equation}
We find
\begin{equation}
    S(k)=\left(\Phi(0,0;k)\right)^{-1} \Psi(0,0;k)=E_0(k).
\end{equation}
Denote $a(k)=S_{11}(k)$ and $b(k)=S_{12}(k)$, where $S_{ij}$ represents the $i$th row and $j$th column of the matrix $S$. Then according to the symmetries of $\Phi(k;x,t)$ and $\Psi(k;x,t)$, we have
\begin{equation*}
S(k)= \left(\begin{array}{cc}
a(k) & b(k)\\
b(k) &a(k)
\end{array}\right),
\end{equation*}
where $a(k)=\bar{a}(\bar{k})=\frac{1}{2} \left( \varphi(k)+\frac{1}{ \varphi(k)} \right)$ and $b(k)=-\bar{b}(\bar{k})=\frac{1}{2} \left( \varphi(k)-\frac{1}{ \varphi(k)} \right)$.
 Define the reflection coefficient
\begin{equation*}
  r(k) := \frac{b(k)}{a(k)}.
\end{equation*}
\begin{proposition} \label{pro2}
	 $a(k)$, $b(k)$ and $r(k)$ have the following properties:
	\begin{itemize}
		\item [(1)] Analyticity: $a(k)$ is analytic in $\mathbb{C}^+ \backslash [E, -B/2]$ and has a continuous extension to $\left(E,-B/2 \right)_- \cup\left(E,-B/2 \right)_+ $.
		$b(k)$ is continuous in $k \in \mathbb{R} \cup \left(E,\bar{E} \right)$. The function $r(k)$ is defined in $\mathbb{R} \cup \left(E,\bar{E} \right)$ except for the points where $a(k)=0$.
		\item [(2)] Asymptotic behavior:
		\begin{align*}
		&a(k)=1+\mathcal{O}(k^{-1}), \quad k \to \infty, \;\im k \ge 0,\\
		&r(k)=\mathcal{O}(k^{-1}).
		\end{align*}
		\item[(3)]  For $k \in \left(E,\bar{E} \right) $, $a_-(k)=-i\overline{b_+(\bar{k})}$ and $a_+(k)=i\overline{b_-(\bar{k})}$.
        \item[(4)]   Let  \begin{equation}\label{f}
                       f(k):=\frac{i}{a_+(k)a_-(k)},\; k \in (E,-B/2),
                    \end{equation}
                    then we have
                    \begin{align*}
                        f(k)=r_-(k)-r_+(k), \; k \in (E,-B/2).
                    \end{align*}
    \end{itemize}

\end{proposition}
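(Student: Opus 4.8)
The plan is to read off all four properties from the two Wronskian representations
\[
a(k)=\det\bigl(\Psi_1(k),\Phi_2(k)\bigr),\qquad b(k)=\det\bigl(\Phi_1(k),\Psi_1(k)\bigr),
\]
which follow at once from $\Psi=\Phi S$, the column decomposition $\Psi_1=a\Phi_1+b\Phi_2$, and $\det\Phi\equiv1$ (Proposition \ref{pro1}(1)), supplemented where genuinely necessary by the explicit branch $\varphi(k)=\bigl(\tfrac{k-E}{k-\bar E}\bigr)^{1/4}$ coming from $S=E_0$. For part (1) the domain of $a$ is the common domain of analyticity of its two factors: $\Psi_1$ is analytic in $\mathbb{C}^+\backslash[E,\bar E]$ and $\Phi_2$ in $\mathbb{C}^+$ (Proposition \ref{pro1}(2)), and since $[E,\bar E]\cap\overline{\mathbb{C}^+}=[E,-B/2]$ this gives analyticity of $a$ in $\mathbb{C}^+\backslash[E,-B/2]$; the continuous extension to each side of $(E,-B/2)$ follows from the one-sided continuity of $\Psi_1$, or equivalently from the continuity of the two branches of $\varphi$ up to the cut. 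The continuity of $b$ on $\mathbb{R}$ is immediate from $b=\det(\Phi_1,\Psi_1)$ and the continuity of $\Phi_1,\Psi_1$ up to $\mathbb{R}$; on $(E,\bar E)$ it is read off from $b=\tfrac12(\varphi-1/\varphi)$, and $r=b/a$ is then defined and continuous off the zero set of $a$.

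For part (2) I would expand at infinity using $\tfrac{k-E}{k-\bar E}=1-\tfrac{2iA}{k-\bar E}$ (since $E-\bar E=2iA$), giving $\varphi=1+\mathcal O(k^{-1})$ and $1/\varphi=1+\mathcal O(k^{-1})$, hence $a=\tfrac12(\varphi+1/\varphi)=1+\mathcal O(k^{-1})$ and $b=\tfrac12(\varphi-1/\varphi)=\mathcal O(k^{-1})$; this is consistent with the normalisations of $\Psi_1,\Phi_2$ in Proposition \ref{pro1}(4). Since $a\to1$, it follows that $r=b/a=\mathcal O(k^{-1})$.

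Parts (3) and (4) carry the real content. For (3) I would combine the symmetry $\Psi(k)=\sigma_2\overline{\Psi(\bar k)}\sigma_2$ of Proposition \ref{pro1}(3), which in column form reads $\Psi_1(k)=-i\sigma_2\overline{\Psi_2(\bar k)}$, with the Wronskian formulas and the behaviour of the two boundary values of $\Psi_1$ across the cut; conjugation exchanges the two sides of $[E,\bar E]$ together with the upper and lower halves, and tracking this carefully yields $a_-(k)=-i\overline{b_+(\bar k)}$ and $a_+(k)=i\overline{b_-(\bar k)}$ for $k\in(E,\bar E)$. For (4) I would start from $r_--r_+=\dfrac{b_-a_+-b_+a_-}{a_-a_+}$ and reduce the claim to the single identity $b_-a_+-b_+a_-=i$ on $(E,-B/2)$. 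Writing $a_\pm=\tfrac12(\varphi_\pm+1/\varphi_\pm)$, $b_\pm=\tfrac12(\varphi_\pm-1/\varphi_\pm)$, a short cancellation collapses the left-hand side to $\tfrac12\,\dfrac{\varphi_-^2-\varphi_+^2}{\varphi_+\varphi_-}$, so everything hinges on the jump of $\varphi$ across the cut.

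The hard part is precisely this branch bookkeeping. Since $\varphi^4=\tfrac{k-E}{k-\bar E}$ is meromorphic across $[E,\bar E]$, the boundary values satisfy $(\varphi_+/\varphi_-)^4=1$, so the jump is one of $\{i,-1,-i\}$; pinning down the correct factor requires tracking the arguments of $k-E$ and $k-\bar E$ along paths from $k=\infty$ (where $\varphi=1$) to each side of the segment without crossing it. Carrying this out---say at the midpoint $k=-B/2$, approaching once from $\re k>-B/2$ and once from $\re k<-B/2$---gives $\varphi_+=-i\varphi_-$ on $(E,-B/2)$, whence $\varphi_+^2=-\varphi_-^2$, $\varphi_+\varphi_-=-i\varphi_-^2$, and $b_-a_+-b_+a_-=\tfrac12\cdot 2i=i$; this is exactly the factor that makes the stated sign $f=\tfrac{i}{a_+a_-}$ correct, and the same jump underlies the signs in (3). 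The only genuine pitfall is keeping the orientation convention for $\pm$ (equivalently the sign of the $\varphi$-jump) consistent throughout, since the opposite choice would flip the sign of $f$.
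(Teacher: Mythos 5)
Your proposal is correct, but there is little to compare it against line by line, because the paper does not actually prove Proposition \ref{pro2}: it dispatches it in one sentence by citing the analogous computations for the derivative NLS equation with step-like data in \cite{xu}. Your argument is a self-contained replacement, and it exploits the special feature of this problem that the scattering matrix is fully explicit, $S(k)=E_0(k)$, so that $a=\tfrac12(\varphi+1/\varphi)$, $b=\tfrac12(\varphi-1/\varphi)$ with $\varphi^4=(k-E)/(k-\bar E)$ --- formulas the paper records just before the proposition --- which reduces all four parts to branch bookkeeping for $\varphi$; this is exactly in the spirit of the omitted proof. I verified your decisive computations: with $(E,\bar E)$ oriented downward (so the $+$ side is $\re k>-B/2$), the ratio $(k-E)/(k-\bar E)$ is negative real on the cut and is approached from below on the $+$ side and from above on the $-$ side, whence $\varphi_+=-i\varphi_-$; then $b_-a_+-b_+a_-=\tfrac12\bigl(\varphi_-^2-\varphi_+^2\bigr)/(\varphi_+\varphi_-)=i$, which gives (4) with the paper's sign, and the same jump gives $a_-=ib_+$ and $a_+=-ib_-$, which together with $\overline{b_\pm(\bar k)}=-b_\pm(k)$ yields (3). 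One wording slip is worth fixing in your sketch of (3): conjugation does \emph{not} exchange the two sides of $[E,\bar E]$ --- the map $k\mapsto\bar k$ preserves $\re k$ and hence preserves the east/west sides of the vertical cut (it only swaps the upper and lower halves of the segment), and the correct symmetry is $\overline{\varphi_\pm(\bar k)}=1/\varphi_\pm(k)$ with sides kept; the side-exchanging reading would force $\varphi_-^2=-i$ pointwise on the cut, which is false. Your final identities are nonetheless the right ones, and your explicit warning about the orientation convention is apt: the paper never states the orientation of $(E,\bar E)$, and the opposite choice would indeed flip the sign of $f$.
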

\begin{proof}
	Proofs of Propositions \ref{pro1} and \ref{pro2} are similar to the proof in Section 2  in \cite{xu}.
\end{proof}

\section{A RH problem with step-like initial data}
Rewrite the formula (\ref{S}) into  the vector form:
\begin{align*}
     &\frac{\Psi_1(k;x,t)}{a(k)}=\Phi_1(k;x,t) + r(k) \Phi_2(k;x,t),\\
     &\frac{\Psi_2(k;x,t)}{a(k)}=r(k)\Phi_1(k;x,t) + \Phi_2(k;x,t).
\end{align*}
Then we define the matrix $M(k;x,t)$ in the following way:
\begin{equation*}
    M(k;x,t)=\begin{cases}
        \left(\frac{\Psi_1(k;x,t)}{a(k)}e^{it\theta(k)}, \Phi_2(k;x,t) e^{-it\theta(k)}  \right), \; k\in \mathbb{C}^+ \backslash [E,-B/2],\\
        \left(\Phi_1(k;x,t) e^{it\theta(k)}, \frac{\Psi_2(k;x,t)}{\overline{a(\bar{k})}} e^{-it\theta(k)} \right), \; k\in \mathbb{C}^-\backslash [-B/2,\bar{E}],
    \end{cases}
\end{equation*}
where $\theta(k)=\theta(k;x,t)=2k^3+\frac{kx}{t}$.
Thus we obtain the following RHP of $M(k;x,t)$.

\begin{RHP}\label{RHP1}
Find a matrix-valued function $M(k;x,t)$ which satisfies
\begin{itemize}
    \item Analyticity: $M(k;x,t)$ is analytic in $\mathbb{C}\setminus \Sigma$, where $\Sigma=\mathbb{R}\cup \left(E,\bar{E} \right)$.
    \item Boundedness: $M(k;x,t)$ is bounded at the points $E$ and $\bar{E}$.
    \item Jump condition: $M(k;x,t)$ satisfies the jump condition
    $$M_-(k;x,t)=M_+(k;x,t)V(k;x,t), \; k \in \Sigma,$$
    where
    \begin{equation}\label{V0}
       V(k;x,t)=\begin{cases}
        \left(\begin{array}{cc}
            1 & -\overline{r(k)}     e^{-2it \theta(k)}\\
            -r(k)e^{2it \theta(k)} & 1+|r(k)|^2
        \end{array}\right),\; k \in \mathbb{R},\\
    \left(\begin{array}{cc}
        1 & 0\\
       f(k)e^{2it \theta(k)} & 1
    \end{array}\right),\; k \in [E,-B/2],\\
    \left(\begin{array}{cc}
        1 & - \overline{f(\bar{k})} e^{-2it \theta(k)}\\
       0 & 1
    \end{array}\right),\; k \in [-B/2,\bar{E}]
\end{cases}
\end{equation}
with $f(k)$ being given in (\ref{f}).

    \item Asymptotic behavior:
    \begin{align*}
            M(k;x,t)=I+\mathcal{O}(k^{-1}),	\quad  k \to  \infty.
    \end{align*}
\end{itemize}
\end{RHP}
The solution $u(x,t)$ of the initial value problem (\ref{cmkdv})-(\ref{sl}) can be reconstructed by the solution of RHP \ref{RHP1} as follows:
\begin{equation}
      u(x,t)=2i \lim_{k \to \infty} k\left(M(k;x,t)\right)_{12},
\end{equation}
where $\left(M(k;x,t)\right)_{12}$ is the $12$th entry of the solution $M(k;x,t)$ of  RHP \ref{RHP1}.

\section{Large-time asymptotics for complex mKdV equation }
\begin{figure}
	\begin{center}
		\subfigure[$\xi>0$]{
			\begin{tikzpicture}
			\node[anchor=south west,inner sep=0](image) at (7,0)
			{\centering\includegraphics[width=0.35\textwidth]{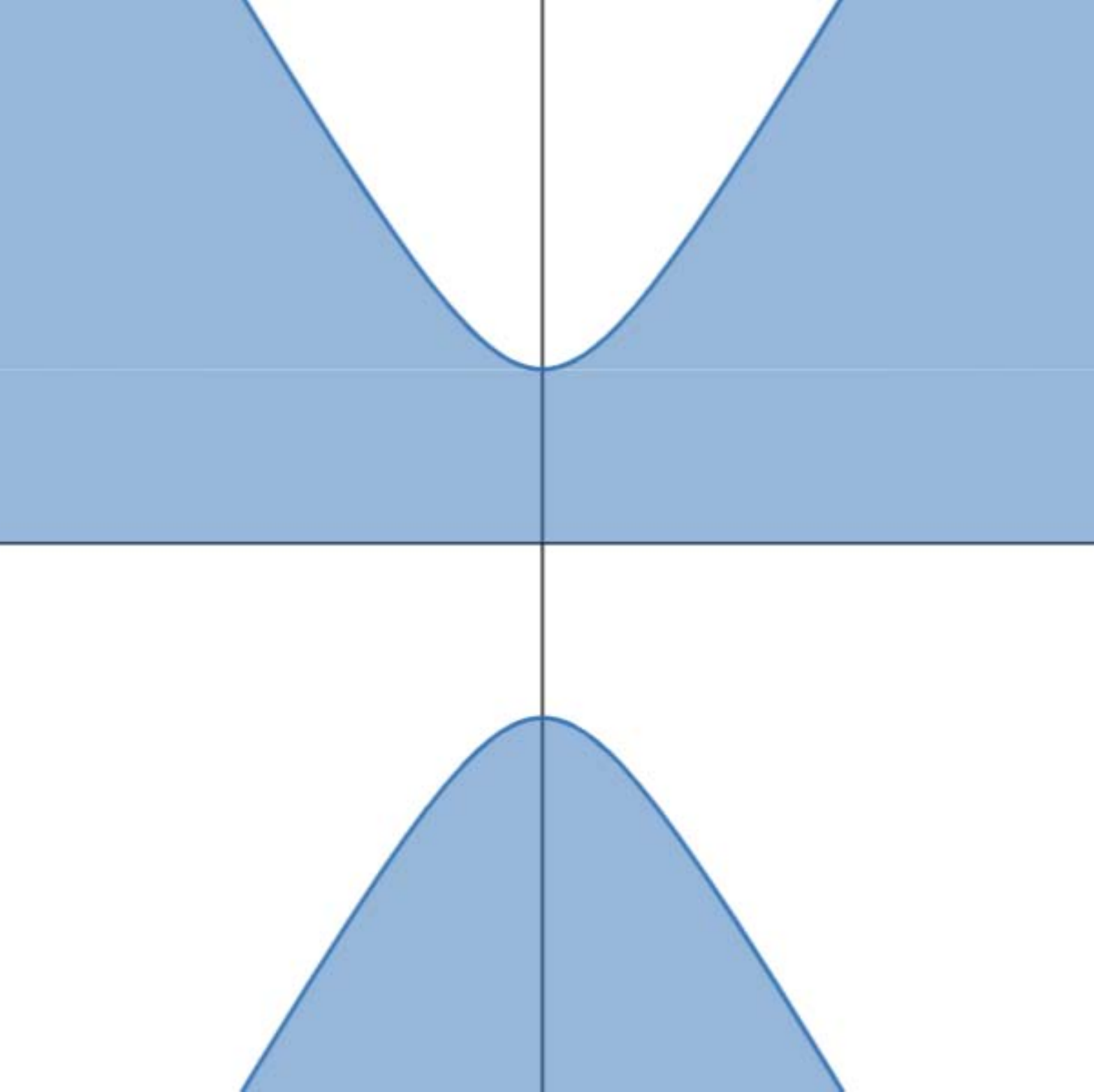}};
			\begin{scope}[x={(image.south east)},y={(image.north west)}]
			

			\node at (0.25,0.7) {$+$};
			\node at (0.52,0.7) {$+$};
			\node at (0.75,0.28) {$-$};
			\node at (0.33,0.82) {$-$};
			\node at (0.49,0.26) {$-$};
			\node at (0.68,0.15) {$+$};

			\end{scope}		
			\end{tikzpicture}
		}%
		\quad\quad  \subfigure[$\xi<0$]{
			\begin{tikzpicture}
			\node[anchor=south west,inner sep=0](image) at (7,0)
			{\centering\includegraphics[width=0.35\textwidth]{FIG1.pdf}};
			\begin{scope}[x={(image.south east)},y={(image.north west)}]
			
			\node at (0.57,0.7) {$+$};
			\node at (0.78,0.28) {$-$};
			
			\node at (0.43,0.28) {$-$};
			\node at (0.22,0.7) {$+$};
			
			\node at (0.68,0.15) {$+$};
			\node at (0.34,0.8) {$-$};

			\end{scope}		
			\end{tikzpicture}
		}%
	\end{center}
	\caption{\small The sign of $\im \theta(k)$: The signal $+$ stands for $\im \theta(k)>0$ in the blue region, while the signal $-$ stands for $\im \theta(k)<0$ in the white region.}
	\label{xio1}
\end{figure}

It is widely known that the jump matrix $V(k;x,t)$ in (\ref{V0}) depends on the exponential term $e^{\pm 2i \theta(k)}$.  Thus the  sign of $\im \theta(k)$  plays an important role in  analyzing the asymptotic behavior of the solution $M(k;x,t)$.
Assume $k=k_1+ik_2$ and introduce $\xi=\frac{x}{6t}$ and $\theta(k)$ satisfying $\theta(k)=2k^3+6k\xi$.
Then we obtain
\begin{align*}
\im \theta(k)=-2 k_2^3+ 6k_2(k_1^2+\xi).
\end{align*}
The sign of the $\im \theta(k)$ is depicted in Figure \ref{xio1}.

According to the sign of the function  $\im \theta(k)$ and the number of genus of $g(k;\xi)$,
 which will be given in the subsequent section, the regions of $\xi$ can be divided into four regions.
\begin{itemize}
	\item 	The  slow decay region  in the genus-$0$ sector:            $\xi \in \{\xi \in \mathbb{R}^+|\xi>\frac{A^2}{3}-\frac{B^2}{4}\}$.

   \item 	The Zakharov-Manakov(Z-M) region  in the genus-$0$ sector : $\xi \in \{\xi \in \mathbb{R}^-|\xi>\frac{A^2}{3}-\frac{B^2}{4}\}$.

	\item  The elliptic wave region  in the genus-$1$ sector: $\xi \in \{\xi \in \mathbb{R}^-|  -\frac{B^2}{4}<\xi < \frac{A^2}{3}-\frac{B^2}{4} \}$.
	
\item 	The plane wave region  in the genus-$0$ sector:  $\xi \in \{\xi \in \mathbb{R}^-|\xi<-\frac{B^2}{4}  \}$.

\end{itemize}
See Figure \ref{region}.
In this paper, we only consider the case of 0 genus and we will consider other cases of higher genus later. The asymptotic results in different regions are shown in the following theorem.

\begin{figure}
	\begin{center}
		\begin{tikzpicture}

		\draw[Blue!20,fill=Blue!15] (0,0)--(-4.8,4.5)--(-4.8,0);
		\node at (-3.5,1.4) {plane wave};
        \node at (-3.5,0.9) {genus-0};
        \draw[LightSteelBlue!20,fill=LightSteelBlue!10] (0,0)--(-4.8,4.5)--(-1.98,4.5);
	    \node at (-2.4,3.2) {elliptic wave};
        \node at (-2.1,2.6) {genus-1};
        \draw[Green!10,fill=Green!15] (0,0)--(0,4.5)--(4.8,4.5)--(4.8,0);
		\node at (2.5,2.4) {slow decay};
        \node at (2.5,1.9) {genus-0};
        \draw[Yellow!10,fill=Yellow!15] (0,0)--(-1.98,4.5)--(0,4.5);	
        \node at (-0.5,3.4) {Z-M};
        \node at (-0.6,2.9) {genus-0};
        \node at (-1.9,5) {$\xi = \frac{A^2}{3}-\frac{B^2}{4}$};
         \node at (-4.7,5) {$\xi = -\frac{B^2}{4}$};
		\draw[ ->](-5.5,0)--(5.5,0)node[black,right]{$x$};
		\draw[ ->](0,0)--(0,5.5)node[black,right]{$t$};
	  \draw[dashed,red](-4.8,4.5)--(0,0);
		\draw[dashed,red](-1.98,4.5)--(0,0);
		\end{tikzpicture}
	\end{center}
	\caption{ \small  The different asymptotic  regions of $(x,t)$-plane.}
	\label{region}
\end{figure}
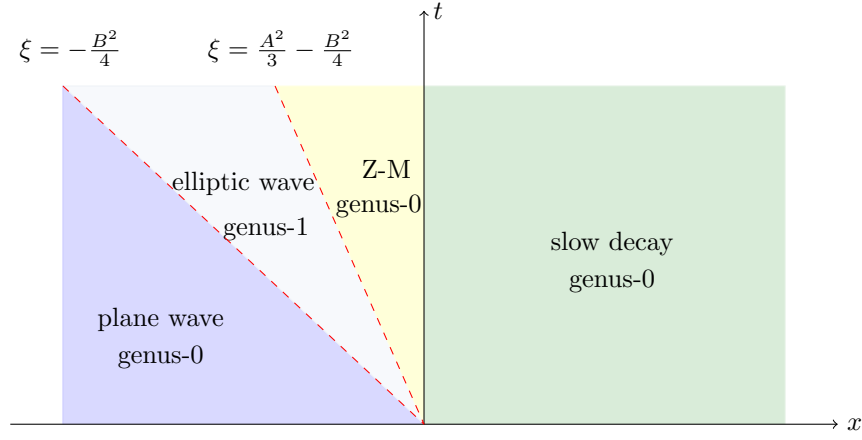

\begin{theorem} \label{Th1}
  The solution $u(x,t)$ of the initial problem (\ref{cmkdv})-(\ref{sl}) under the  step-like initial data can be constructed by the following formulas.

\begin{itemize}
	\item[$\blacktriangleright$ ] (Z-M region) For
 $\left(2A^2-\frac{3 }{2} B^2\right)t<x<0$, as $t\to+\infty$,
	\begin{equation}
	u(x,t)= \frac{v e^{\frac{\pi v}{2}}}{\sqrt{12tk_0 \pi}} \left(  \delta_{k_0}^2 e^{-\frac{3 \pi i}{4}} \Gamma(iv) \overline{r(k_0)} - \delta_{k_0}^{-2} e^{\frac{3 \pi i}{4}} \Gamma(-iv) r(k_0) \right) +
\mathcal{O}\left(t^{-1} \log t\right),
	\end{equation}
	where
	\begin{align*}
	& \delta_{k_0} = \left(24t(k_0)^3 \right)^{-\frac{iv}{2}} e ^{4it k_0^3} e^{\chi(k_0)}, \\
	&k_0 = \sqrt{-\frac{x}{6t}},\; v = -\frac{1}{2\pi} \log \left(1+|r(k_0)|^2\right),\\
	& \chi(k_0)=\frac{1}{2\pi i} \int_{-k_0}^{k_0} \frac{v(s)-v(k_0)}{s-k_0} \mathrm{d} s.
	\end{align*}
	\item[$\blacktriangleright$ ] (Plane wave region) For
  $ x<-\frac{3}{2}B^2t $, as $t \to +\infty$,
		\begin{equation*}
		u(x,t)= A e^{i \left(Ct+Bx-2 \phi(\xi)\right)} + \mathcal{O}\left( t^{-1/2}\right), t \to \infty,
		\end{equation*}
		where $\phi(\xi)$ is given in (\ref{phi}).
	\item[$\blacktriangleright$ ] (Slow decay region) For 	
  $x>\left( 2A^2 -\frac{3}{2}B^2\right)t$ with $x>0$,  as  $t \to +\infty$,
		\begin{align}
		u(x,t) = \begin{cases} \mathcal{O}\left(  \frac{e^{12\sqrt{3}t   \left( \left(\xi +\frac{B^2}{4}\right)^{3/2} + \left( \xi +\frac{B^2}{4} \right)^{1/2} \right)}}{ \sqrt{t}}    \right),\quad 2 A^2 - \frac{3}{2}B^2 t < x< A^2 t,\\
		\mathcal{O} \left( \frac{e^{12\sqrt{6t}\xi^{3/2}}}{\sqrt{t}}\right),\quad A^2t <x< 6A^2 t,\\
		\mathcal{O} \left( \frac{e^{-8t \xi^{3/2}}}{\sqrt{t}}\right), \quad x>6A^2t.
		\end{cases}
		\end{align}

\end{itemize}
\end{theorem}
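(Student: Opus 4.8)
The plan is to analyze RHP \ref{RHP1} by the Deift--Zhou nonlinear steepest descent method, treating each of the three genus-$0$ regions separately according to the sign chart of $\im\theta(k)$ in Figure \ref{xio1}. In every region the first task is to locate the saddle points of the phase, determined by $\theta'(k)=6(k^2+\xi)=0$, so that $k=\pm\sqrt{-\xi}=\pm k_0$ are real for $\xi<0$ and purely imaginary for $\xi>0$. The position of these saddles relative to the real axis and to the branch cut $[E,\bar E]$ is precisely what distinguishes the regions and dictates how the contour $\Sigma=\mathbb{R}\cup(E,\bar E)$ is to be deformed; it is also what fixes the genus, which is $0$ in all three cases treated here.

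For the Z--M region I would follow the classical program. First introduce the scalar function
\begin{equation*}
\delta(k)=\exp\left(\frac{1}{2\pi i}\int_{-k_0}^{k_0}\frac{\log\left(1+|r(s)|^2\right)}{s-k}\,\mathrm{d}s\right),
\end{equation*}
which solves the scalar RHP $\delta_-=\delta_+\left(1+|r|^2\right)$ on $(-k_0,k_0)$ and is trivial elsewhere, and conjugate $M$ by $\delta^{\sigma_3}$ to remove the diagonal entry $1+|r|^2$ and split the real-axis jump into a lower-triangular times an upper-triangular factor. Next I would open lenses along the steepest-descent rays from $\pm k_0$, using the sign of $\im\theta$ to ensure that the off-diagonal entries $r\,e^{2it\theta}$ and $\overline{r}\,e^{-2it\theta}$ decay exponentially once the contour leaves $\mathbb{R}$. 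Away from the saddles the jumps are then exponentially close to the identity, so the problem localizes. Near each saddle I would freeze the coefficients and solve the local model RHP by parabolic-cylinder functions; this is the step that produces $v=-\tfrac{1}{2\pi}\log\left(1+|r(k_0)|^2\right)$, the factors $\Gamma(\pm iv)$, and the scaling/phase factor $\delta_{k_0}=\left(24tk_0^3\right)^{-iv/2}e^{4itk_0^3}e^{\chi(k_0)}$, with $\chi(k_0)$ the regularized value of the above integral at the saddle. Matching the two local parametrices to the trivial global parametrix $I$ and estimating the residue by a small-norm RHP yields the leading $t^{-1/2}$ term and the $\mathcal{O}(t^{-1}\log t)$ correction.

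For the plane wave region the non-decaying boundary forces a $g$-function built from the plane-wave data $X(k),\Omega(k)$ of \eqref{XO} (equivalently, a conjugation by the explicit $E_0(k)$ and $\varphi(k)$ of \eqref{var}): this normalizes the exponentials so that the jump along the relevant part of $\Sigma$ stays bounded and the cut $[E,\bar E]$ carries only plane-wave oscillation. The reduced model reconstructs $Ae^{i(Ct+Bx)}$, the phase correction $-2\phi(\xi)$ emerging from the $\delta$-type integral accumulated along the deformation, with a residual saddle contribution of order $t^{-1/2}$. For the slow decay region ($\xi>A^2/3-B^2/4$, $x>0$) the saddles leave the real axis and there is no genuine stationary-phase contribution on $\Sigma$; here I would deform $\mathbb{R}$ and the cut into regions where $\im\theta$ has fixed sign, bound each off-diagonal entry by $e^{\pm t\,\im\theta}$, and track the extremal value of $\im\theta$ along the deformed contour in the three sub-ranges to obtain the three exponential bounds, each with the generic $t^{-1/2}$ prefactor from the reconstruction formula.

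The main obstacle I expect is the uniform construction and matching of the parametrices in the presence of the branch cut $[E,\bar E]$. Unlike the purely decaying case, both $\delta$ and the $g$-function must be chosen to respect the saddle structure on $\mathbb{R}$ and simultaneously the analyticity and boundary behaviour of $r$ and $f$ on the cut recorded in Proposition \ref{pro2}; verifying that the lens openings do not collide with the cut, and that the error RHP genuinely has small $L^2\cap L^\infty$ norm across all of $\Sigma$, is the delicate point, most acutely at the transition $\xi=A^2/3-B^2/4$ separating the Z--M and slow-decay sectors, where the saddle $k_0$ approaches the real axis and the two analyses must be shown to be compatible.
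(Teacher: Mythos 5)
Your proposal follows essentially the same route as the paper: a $\delta$-function conjugation, lens opening, and parabolic-cylinder local models at $\pm k_0$ in the Z--M region; a $g$-function $g=\Omega+6\xi X$ with a scalar factorization reducing the cut jump to a constant-matrix model RHP (solved by $E_0(k)$, $\varphi(k)$) in the plane wave region; and direct contour deformation with exponential bounds in the three sub-ranges of the slow decay region. Apart from an immaterial orientation convention in the scalar RHP for $\delta$ (the paper uses $M_-=M_+V$, so $\delta_-=\delta_+\left(1+|r|^2\right)^{-1}$), the plan matches the paper's proof.
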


In the following sections, we will give proofs for each of these three regions.

\subsection{The Z-M region}

 We first consider the large-time  asymptotic behavior in the Z-M region $\xi>\frac{A^2}{3}-\frac{B^2}{4}$ under the condition $\xi<0$, which is $\frac{A^2}{3}-\frac{B^2}{4}<\xi<0$ with $A<\sqrt{\frac{3B^2}{4}}$.
Define $k_0=\sqrt{-\frac{x}{6t}}$. We  decompose the jump matrix $V(k;x,t)$ (\ref{V0}) on $\mathbb{R}$ into the form as follows:
\begin{align}
    V(k)=\begin{cases}\label{de}
        \left(\begin{array}{cc}
            1 & 0\\
            -r(k)e^{2it \theta(k)} & 1+|r(z)|^2
        \end{array}\right)  \left(\begin{array}{cc}
            1 & -\overline{r(k)}     e^{-2it \theta(k)}\\
            0 & 1
        \end{array}\right),\; k \in \left(k_0,+\infty \right) \cup \left(-\infty,-k_0\right),\\
        \left(\begin{array}{cc}
            1 & -\frac{\bar{r}}{1+|r|^2} e^{-2it \theta(k)} \\
            0 & 1
        \end{array}\right)  \left(\begin{array}{cc}
            \frac{1}{1+|r|^2} & 0\\
            0 & 1+|r|^2
        \end{array}\right)  \left(\begin{array}{cc}
            1 & 0\\
            -\frac{r}{1+|r|^2} e^{2it \theta(k)} & 1
        \end{array}\right),\; k \in \left(-k_0,k_0\right).
    \end{cases}
\end{align}
     To find the solution of  RHP \ref{RHP1}, we make the following three transformations:
     \begin{itemize}
        \item First transformation: Remove the middle matrix in the second decomposition in  (\ref{de}).
        \item Second transformation:  Open the jump line by analytic continuation.
        \item Third transformation: Match the parabolic cylinder model.
    \end{itemize}
    We first introduce a scalar RHP of $\delta(k)$:

\begin{RHP}
  Find a matrix-valued function $\delta(k)$ which satisfies
\begin{itemize}
    \item Analyticity: $\delta(k)$ is analytic in $\mathbb{C}\setminus [-k_0,k_0]$, where $\Sigma=\mathbb{R}\cup \left(E,\bar{E} \right)$.
    \item Jump condition: $\delta_-(k)=\delta_+(k) \left(1+|r|^2\right)^{-1}$, $|k|<k_0$.
    \item Asymptotic behavior: $\delta(k)\to1$, $k\to \infty$.
\end{itemize} 	
\end{RHP}

Using Plemelj formula, we find
\begin{equation*}
     \delta(k)=\exp \left(\frac{1}{2\pi i} \int_{-k_0}^{k_0} \frac{\log \left(1+|r(s)|^2\right)}{s-k} \mathrm{d} s\right) = \left(\frac{k-k_0}{k+k_0}\right)^{iv(k_0)} e^{\chi(k)},
\end{equation*}
where
\begin{equation}
    v(k)=-\frac{1}{2\pi} \log \left(1+|r(k)|^2\right),\; \chi(k)=\frac{1}{2\pi i} \int_{-k_0}^{k_0} \frac{v(s)-v(k_0)}{s-k} \mathrm{d} s.
\end{equation}
Then, we make the first transformation
\begin{equation}
       M^{(1)}(k;x,t)=M(k;x,t) \delta^{-\sigma_3}(k),
\end{equation}
which satisfies the following RHP:

\begin{RHP}
	 Find a matrix-valued function $M^{(1)}(k;x,t)$ which satisfies
\begin{itemize}
    \item Analyticity: $M^{(1)}(k;x,t)$ is analytic in $\mathbb{C}\setminus \Sigma^{(1)}$, where $\Sigma^{(1)}=\Sigma$.
    \item Jump condition: $M^{(1)}(k;x,t)$ satisfies the jump relation
    \begin{equation}\label{V1}
        V^{(1)}(k;x,t)=\begin{cases}
         \left(\begin{array}{cc}
             1 & -\frac{\overline{r(k)}}{1+|r(k)|^2}  \delta_+^2(k) e^{-2it \theta(k)}\\
             0 & 1
         \end{array}\right)    \left(\begin{array}{cc}
            1 & 0\\
            -\frac{r(k)}{1+|r(k)|^2}  \delta_-^{-2}(k) e^{2it \theta(k)} & 1
        \end{array}\right) ,\; |k|<k_0,\\
     \left(\begin{array}{cc}
         1 & 0\\
        -r(k) \delta^{-2}(k) e^{2it \theta(k)} & 1
     \end{array}\right)\left(\begin{array}{cc}
        1 & -\overline{r(\bar{k})} \delta^{2}(k) e^{-2it \theta(k)} \\
       0& 1
    \end{array}\right),\; |k|>k_0,\\
    \left(\begin{array}{cc}
        1 & 0\\
       f(k) \delta^{-2}(k)e^{2it \theta(k)} & 1
    \end{array}\right),\; k \in [E,-B/2],\\
     \left(\begin{array}{cc}
         1 & - \overline{f(\bar{k})} \delta^{2}(k)e^{-2it \theta(k)}\\
        0 & 1
     \end{array}\right),\; k \in [-B/2,\bar{E}].
 \end{cases}
 \end{equation}
    \item Asymptotic behavior: $M^{(1)}(k;x,t)\to I$, $k\to \infty$.
\end{itemize}
\end{RHP}

Next,  the jump matrix $V^{(1)}(k;x,t)$ can be analytically extended from the boundary $\Sigma^{(1)}$ by doing the second transformation
\begin{equation*}
     M^{(2)}(k;x,t)=M^{(1)}(k;x,t) G(k),
\end{equation*}
where
\begin{align}
    G(k)=\begin{cases}\label{G}
        \left(\begin{array}{cc}
            1 & 0\\
            -r(k)  \delta^{-2}(k)e^{2it \theta(k)} & 1
        \end{array}\right),\; k \in \Omega_{11} \cup \Omega_{21},\\
       \left(\begin{array}{cc}
            1 & -\overline{r(\bar{k})} \delta^{2}(k) e^{-2it \theta(k)} \\
           0 & 1
        \end{array}\right),\; k\in \Omega_{14} \cup \Omega_{24},\\
        \left(\begin{array}{cc}
            1 & -\frac{\overline{r(\bar{k})}}{1+|r(k)|^2} \delta_+^2(k)^{-2it \theta(k)} \\
            0 & 1
        \end{array}\right),\; k \in \Omega_{12} \cup \Omega_{22},\\
        \left(\begin{array}{cc}
            1 & 0\\
            -\frac{r(k)}{1+|r(k)|^2} \delta_-^2(k) e^{2it \theta(k)} & 1
        \end{array}\right),\; k \in \Omega_{13} \cup \Omega_{23}.\\
    \end{cases}
\end{align}
The domains $\Omega_{11},\cdots,\Omega_{14}$ and $\Omega_{21},\cdots,\Omega_{24}$ can be seen in Figure \ref{rj1}. The function $M^{(2)}(k;x,t)$ solves the RHP \ref{m2} as follows:

\begin{figure}
	\begin{center}
		 \begin{tikzpicture}
		
\draw[Blue!10,fill=LightSteelBlue!] (-3.6,1.8)--(-1.8,0)--(-3.6,0);
\draw[CadetBlue!20,fill=LightSteelBlue!20] (-3.6,-1.8)--(-1.8,0)--(-3.6,0);	
\draw[CadetBlue!20,fill=LightSteelBlue!20] (-1.8,0)--(0,1.8)--(0,0);
\draw[Blue!10,fill=LightSteelBlue!] (-1.8,0)--(0,-1.8)--(0,0);
\draw[Blue!10,fill=LightSteelBlue!] (0,0)--(0,-1.8)--(1.8,0);
\draw[CadetBlue!20,fill=LightSteelBlue!20] (1.8,0)--(0,1.8)--(0,0);
\draw[Blue!10,fill=LightSteelBlue!] (3.6,1.8)--(1.8,0)--(3.6,0);
\draw[CadetBlue!20,fill=LightSteelBlue!20] (3.6,-1.8)--(1.8,0)--(3.6,0);	
		\draw[](-2.7,0.9)--(-1.8,0);
		\draw[->](-3.6,1.8)--(-2.7,0.9);

		\draw[](-2.7,-0.9)--(-1.8,0);
       \draw[->](-3.6,-1.8)--(-2.7,-0.9);

        \draw[->](-1.8,0)--(-0.9,0.9);
        \draw[](-0.9,0.9)--(0,1.8);

        \draw[->](-1.8,0)--(-0.9,-0.9);
        \draw[](-0.9,-0.9)--(0,-1.8);

		\draw[->](1.8,0)--(2.7,0.9);
		\draw[](3.6,1.8)--(2.7,0.9);
		
		\draw[->](1.8,0)--(2.7,-0.9);
		\draw[](3.6,-1.8)--(2.7,-0.9);
		
		\draw[](1.8,0)--(0.9,0.9);
		\draw[->](0,1.8)--(0.9,0.9);
		
		\draw[](1.8,0)--(0.9,-0.9);
		\draw[->](0,-1.8)--(0.9,-0.9);
		
				\node[scale=1] at (1.8,-0.3) {$k_0$};
			\node[scale=1] at (-1.8,-0.3) {$-k_0$};
		\node[scale=1] at (2.8,0.3) {$\Omega_{21}$};
	 	\node[scale=1] at (3.1,0.8) {$\Sigma_{21}$};
		\node[scale=1] at (2.8,-0.3) {$\Omega_{24}$};
		\node[scale=1] at (3.1,-0.8) {$\Sigma_{24}$};

		\node[scale=1] at (0.72,0.3) {$\Omega_{22}$};
		\node[scale=1] at (0.6,0.8) {$\Sigma_{22}$};
		\node[scale=1] at (0.72,-0.3) {$\Omega_{23}$};
		\node[scale=1] at (0.6,-0.8) {$\Sigma_{23}$};

		\node[scale=1] at (-0.72,0.3) {$\Omega_{12}$};
	   \node[scale=1] at (-0.6,0.8) {$\Sigma_{12}$};
		\node[scale=1] at (-0.72,-0.3) {$\Omega_{13}$};
		\node[scale=1] at (-0.6,-0.8) {$\Sigma_{13}$};
		\node[scale=1] at (-2.8,0.3) {$\Omega_{11}$};
		\node[scale=1] at (-3.1,0.8) {$\Sigma_{11}$};
		\node[scale=1] at (-2.8,-0.3) {$\Omega_{14}$};
	    \node[scale=1] at (-3.1,-0.8) {$\Sigma_{14}$};

		\draw[ ->](-4,0)--(4,0)node[black,right]{Re$z$};
		\draw[ ->](0,-2)--(0,2)node[black,right]{Im$z$};
		\end{tikzpicture}
    \end{center}
\caption{ \small The oriented contours and regions.   ${\rm Re} (2i\theta)>0$   in the blue regions and ${\rm Re} (2i\theta)<0$  in the gray regions.}
\label{rj1}
\end{figure}
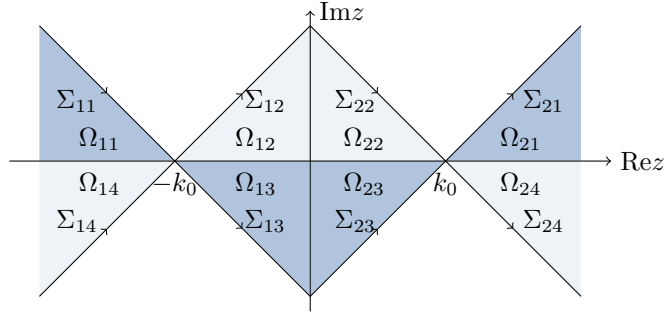

\begin{RHP}\label{m2}
	 Find a matrix-valued function $M^{(2)}(k;x,t)$ which satisfies
\begin{itemize}
    \item Analyticity: $M^{(2)}(k;x,t)$ is analytic in $\mathbb{C}\setminus \Sigma^{(2)}$, where $\Sigma^{(2)}= \cup_{i=1}^4 \left(\Sigma_{1i} \cup \Sigma_{2i} \right)$. See Figure \ref{rj1}.
    \item Jump condition: $M^{(2)}(k;x,t)$ has the jump condition
    \begin{equation}\label{V2}
        V^{(2)}(k;x,t)=\begin{cases}
            \left(\begin{array}{cc}
                1 & 0\\
                -r(k)  \delta^{-2}(k)e^{2it \theta(k)} & 1
            \end{array}\right),\; k \in \Sigma_{11} \cup \Sigma_{21},\\
           \left(\begin{array}{cc}
                1 & -\overline{r(\bar{k})} \delta^{2}(k) e^{-2it \theta(k)} \\
               0 & 1
            \end{array}\right),\; k\in \Sigma_{14} \cup \Sigma_{24},\\
            \left(\begin{array}{cc}
                1 & -\frac{\overline{r(\bar{k})}}{1+|r(k)|^2} \delta_+^2(k)^{-2it \theta(k)} \\
                0 & 1
            \end{array}\right),\; k \in \Sigma_{12} \cup \Sigma_{22},\\
            \left(\begin{array}{cc}
                1 & 0\\
                -\frac{r(k)}{1+|r(k)|^2} \delta_-^2(k) e^{2it \theta(k)} & 1
            \end{array}\right),\; k \in\Sigma_{13} \cup \Sigma_{23},\\
    \left(\begin{array}{cc}
        1 & 0\\
       f(k) \delta^{-2}(k)e^{2it \theta(k)} & 1
    \end{array}\right),\; k \in [E,-B/2],\\
     \left(\begin{array}{cc}
         1 & - \overline{f(\bar{k})} \delta^{2}(k)e^{-2it \theta(k)}\\
        0 & 1
     \end{array}\right),\; k \in [-B/2,\bar{E}].
 \end{cases}
 \end{equation}
    \item Asymptotic behavior: $M^{(2)}(k;x,t)\to I$, $k\to \infty$.
\end{itemize}
\end{RHP}
Since the jump matrix (\ref{V2}) decays to the identity matrix uniformly and exponentially fast, except for the neighbors of $k=k_0$ and $k=-k_0$, we match $M^{(2)}(k;x,t)$ with the local model $M^{lo}(k;x,t)$ in  the following way
\begin{equation*}
    M^{(2)}(k;x,t)=E(k;x,t)M^{lo}(k;x,t),
\end{equation*}
where $M^{lo}(k;x,t)$ is a solution of RHP \ref{mlo} and $E(k;x,t)$ is an error function.
Let $\epsilon>0$ and define the two small regions around the points $\pm k_0$ as
\begin{equation*}
     \mathcal{U}_{\pm k_0}:=\{  k \in \mathbb{C}| |k\pm k_0|\le \epsilon \},
\end{equation*}
and the corresponding boundaries of the regions $\mathcal{U}_{\pm k_0}$ as $\partial \mathcal{U}_{\pm k_0} $.
Denote  new contours  $\Sigma_{i}:= \cup_{j=1}^4 \Sigma_{ij} \cap \mathcal{U}_{\pm k_0}$, $i=1,2$ and $\Sigma^{lo}:=\Sigma^{(2)}\cap \mathcal{U}_{\pm k_0}=   \Sigma_{1}\cup  \Sigma_{2}$. See Figure \ref{rj2}.

\begin{figure}
	\begin{center}
		\begin{tikzpicture}
		
	    \draw[dashed](-1.8,0) circle (1.4142135623730950488016887242096980785696718753);
		\draw[](-2.3,0.5)--(-1.8,0);
		\draw[->](-2.8,1)--(-2.3,0.5);
		
		\draw[](-2.3,-0.5)--(-1.8,0);
		\draw[->](-2.8,-1)--(-2.3,-0.5);
		
		\draw[->](-1.8,0)--(-1.3,0.5);
		\draw[](-1.3,0.5)--(-0.8,1);
		
		\draw[->](-1.8,0)--(-1.3,-0.5);
		\draw[](-1.3,-0.5)--(-0.8,-1);

	  \draw[dashed](1.8,0) circle (1.4142135623730950488016887242096980785696718753);
		\draw[->](1.8,0)--(2.3,0.5);
		\draw[](2.8,1)--(2.3,0.5);
		
		\draw[->](1.8,0)--(2.3,-0.5);
		\draw[](2.8,-1)--(2.3,-0.5);
		
		\draw[](1.8,0)--(1.3,0.5);
		\draw[->](0.8,1)--(1.3,0.5);
		
		\draw[](1.8,0)--(1.3,-0.5);
		\draw[->](0.8,-1)--(1.3,-0.5);
		
			\coordinate (I) at (1.8,0);
\fill[black] (I) circle (1pt) node[below] {$k_0$};
					\coordinate (A) at (-1.8,0);
		\fill[black] (A) circle (1pt) node[below] {$-k_0$};
		\draw[ ->](-4,0)--(4,0)node[black,right]{Re$z$};
		\draw[ ->](0,-2)--(0,2)node[black,right]{Im$z$};
		\end{tikzpicture}
	\end{center}
	\caption{ \small The oriented contours of $M^{lo}$.}
	\label{rj2}
\end{figure}
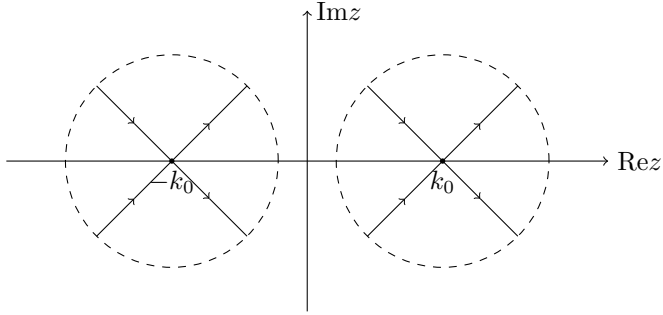

\begin{RHP}  \label{mlo}
	Find a matrix-valued function  $M^{lo}(k;x,t)$ such that
              \begin{itemize}
          	\item Analyticity: $M^{lo}(k;x,t)$ is analytic in $\mathbb{C}\setminus \Sigma^{lo}$.
          	\item Jump condition:
          	\begin{align*}
          		M^{lo}_-(k;x,t)=M^{lo}_+(k;x,t)V^{(2)}(k;x,t), \quad k\in \Sigma^{lo}.
          	\end{align*}
          	\item Asymptotic behavior:  	
          	\begin{align*}
          		M^{lo}(k;x,t)=I+\mathcal{O}(k^{-1}),	\quad  k \to  \infty.
          	\end{align*}
          \end{itemize}
\end{RHP}
Following the idea of constructing local models for the neighbors of the phase points in Section 5.1.2 of \cite{WF}, we  find the contribution to the solution of $M^{lo}(k;x,t)$ is  the sum of the separate contributions from $M^{lo,i}(k;x,t)$ in the contour $\Sigma_i$, $i = 1,2$.
It is obvious that  $M^{lo,i}(k;x,t)$  can be explicitly constructed by the parabolic cylinder functions. See more details in \cite{WF}.

\begin{RHP}
	 Find a matrix-valued function  $M^{lo,i} (k;x,t)$ such that
\begin{itemize}
   \item Analyticity: $M^{lo,i}(k;x,t)$ is analytic in $\mathbb{C}\setminus \Sigma_i$.
   \item Jump condition:
   \begin{align*}
       M^{lo,i}_+(k;x,t)=M^{lo,i}_-(k;x,t)V^{(2)}(k;x,t), \quad k\in \Sigma_{i}.
   \end{align*}
   \item Asymptotic behavior:  	
   \begin{align*}
       M^{lo,i}(k;x,t)=I+\mathcal{O}(k^{-1}),	\quad  k \to  \infty.
   \end{align*}
\end{itemize}
\end{RHP}
Note that $\theta$ is an odd function, which is $\theta(k)=-\theta(-k)$. For $k$ in the neighborhood of $\pm k_0$, we have
\begin{align*}
    \theta(k)&= 2(k+k_0)^3-6k_0 (k+k_0)^2 + 4 k_0^3,\\
    \theta(k)& =  2(k+k_0)^3+6k_0 (k+k_0)^2 - 4 k_0^3
\end{align*}
 We introduce the resealed variables
\begin{align*}
    s_1&=\sqrt{24 t k_0}(k+k_0),\\
    s_2&=\sqrt{24 t k_0}(k-k_0).
\end{align*}
Then we are in a situation where the asymptotic analysis of \cite{GM} works. Therefore, by the similar method in Section 6.1 of \cite{GM}, we obtain the asymptotic result in the Z-M region.


\subsection{The plane wave region}
 For $\xi<-\frac{B^2}{4}$, $\im \theta(k)<0$ for $k\in(E,-\frac{B}{2})$ and $\im \theta(k)>0$ for $k\in(-\frac{B}{2},\bar{E})$, which imply the exponential term $e^{\pm i \theta}$ in the jump matrix $V(k;x,t)$ (\ref{V0}) increases with $t$ on the corresponding interval.
Then,  the jump matrix $V(k;x,t)$ does not converge as $t\to\infty$.
Hence, we have to use the modified nonlinear steepest descent method which involves replacing  the phase function $\theta(k)$ with a new phase function $g(k)$ and then transforming the original RHP into the model RHP of the finite-gap type. See more details in \cite{D1}-\cite{M141}.
We first consider the case  where the $g$-function transforms the original RHP to the model RHP of the zero-genus type.

Setting
\begin{equation}
      g(k;\xi):= \Omega(k)+6 \xi X(k)=\left( 2k^2 -Bk+\frac{B^2}{2}-A^2+6\xi\right) X(k),
\end{equation}
where $X(k)$ and $\Omega(k)$ have been defined in (\ref{XO}).
The differential $\mathrm{d} g(k;\xi)$ can be written in the form
\begin{align*}
    \mathrm{d} g(k;\xi)&= \frac{6k^3+3Bk^2+\left(3A^2+6\xi\right)k-3A^2B/2+3\xi B}{X(k)} \mathrm{d} k \\
    &= 6\frac{\left(k-P_1(\xi)\right)\left(k-P_2(\xi)\right)\left(k-P_3(\xi)\right)}{X(k)}\mathrm{d} k.
\end{align*}
Analysing  the property of  the  denominator of $ \mathrm{d} g(k;\xi)$, we find it has three different real roots if
\begin{align}
   \Lambda (\xi):= \frac{\xi^3}{27}+\left(\frac{A^2}{18}+\frac{B^2}{54}\right)\xi^2 +\left( \frac{A^4}{36}-\frac{7A^2B^2}{108}+\frac{B^4}{432}\right)\xi +\frac{A^6}{216}-\frac{A^2 B^4}{864}+\frac{11A^4B^2}{432}<0,
\end{align}
which is true in the region $\xi<-\frac{B^2}{4}$.
Therefore,  the formula of $P_i(\xi)$ for $i=1,2,3$ can be expressed as
\begin{align*}
    P_1(\xi)= 2 \sqrt[3]{r} \cos \theta_0, \; P_2(\xi)= 2 \sqrt[3]{r} \cos \left( \theta_0+\frac{2\pi}{3}\right),\; P_3(\xi)= 2 \sqrt[3]{r} \cos \left( \theta_0+\frac{4\pi}{3}\right),
\end{align*}
where
\begin{align*}
    r=\sqrt{-\left( \frac{A^2}{6}+\frac{\xi}{3}-\frac{B^2}{36} \right)^3},\; \theta_0=\frac{1}{3}\arccos \left( - \frac{36A^2B+36B\xi+B^3}{216 r}\right).
\end{align*}
$g(k;\xi)$ has the following properties:
\begin{itemize}
    \item $g(k;\xi)$ is analytic in $\mathbb{C}\setminus \left[E,\bar{E}\right]$.
    \item $g(k;\xi)=\theta(k)+g(\infty;\xi)+\mathcal{O}(k^{-1}),\; k\to \infty$,
    where $g(\infty;\xi) = \frac{-6A^2 B + B^3 + 12 B \xi}{4} $.

\begin{figure}
	\begin{center}

	\begin{tikzpicture}
\node[anchor=south west,inner sep=0](image) at (7,0)
{\centering\includegraphics[width=0.6\textwidth]{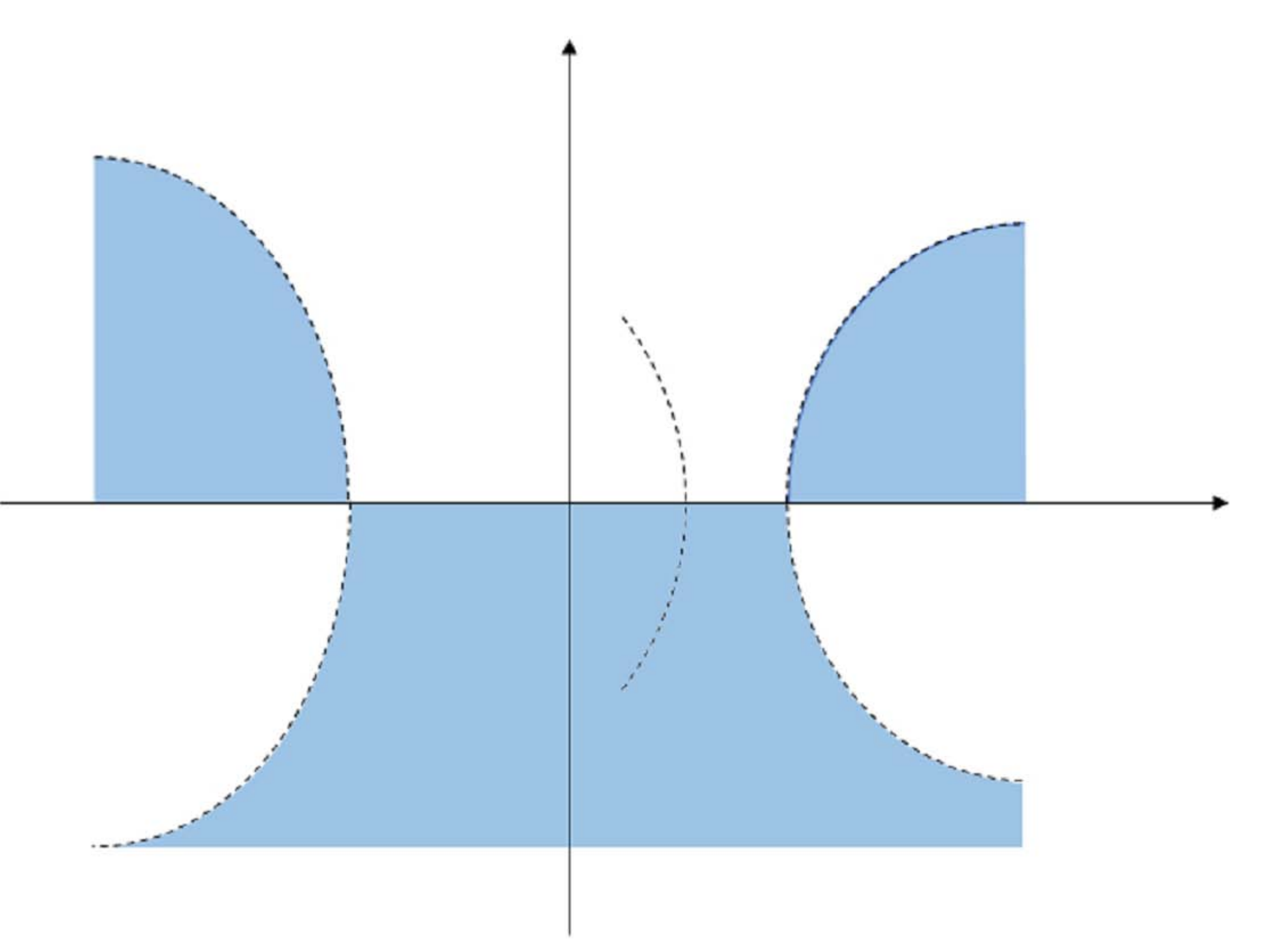}};
\begin{scope}[x={(image.south east)},y={(image.north west)}]


\node at (0.24,0.65) {$+$};
\node at (0.55,0.65) {$+$};
\node at (0.72,0.28) {$-$};
\node at (0.325,0.82) {$-$};
\node at (0.42,0.26) {$-$};
\node at (0.63,0.15) {$+$};	
\node at (0.4,0.42) {$P_1$};	
\node at (0.56,0.42) {$P_2$};	
\node at (0.61,0.42) {$P_3$};	
\node at (0.46,0.57) {$\gamma_g$};
\node at (0.556,0.36) {$\bar{\gamma}_g$};
\end{scope}		
\end{tikzpicture}
    \end{center}
\caption{ \small The sign and boundary of $\im g(k)=0$. The signal $+$ stands for $\im g(k)>0$ in the blue region, while the signal $-$ stands for $\im g(k)<0$ in the white region.}
\label{rj5}
\end{figure}

    \item $g(k;\xi)=\mathcal{O}\left(k+B/2 \mp iA  \right),\; k \to -B/2\pm iA$.
     \item  $g_+(k;\xi)=-g_-(k;\xi),\; k \in \gamma_g \cup \bar{\gamma}_g$, where $\gamma_g \cup \bar{\gamma}_g$ is a finite arc connecting the branch points $E$ and $\bar{E}$ such that $\im g(k;\xi)=0$ for all $k$ along this arc.  More importantly, we can obtain the boundary of $\im g(k;\xi)=0$ through controlling the region which satisfies $\im g^2(k;\xi)=0$ and $\re g^2(k;\xi)>0$. See Figure \ref{rj5}.

\end{itemize}

In the plane wave region $\{\xi \in \mathbb{R}^-|\Lambda (\xi)<0, \xi<-\frac{B^2}{4}  \}$,
     we solve the RHP for the matrix $M(k;x,t)$  by 4 transformations.
     Let us define a new matrix,
     \begin{equation*}
        M^{(1)}(k;x,t)= e^{-itg(\infty;\xi)\sigma_3}  M(k;x,t) e^{it(g(k)-\theta(k))\sigma_3}.
     \end{equation*}
     Then the matrix-value function  $M^{(1)}(k;x,t)$ solves the following RHP:

    \begin{RHP}  Find a matrix-valued function $M^{(1)}(k;x,t)$ which satisfies
     \begin{itemize}
         \item Analyticity: $M^{(1)}(k;x,t)$ is analytic in $\mathbb{C}\setminus \Sigma^{(1)}$, where $\Sigma^{(1)}=\mathbb{R}\cup \gamma_g \cup \bar{\gamma}_g$.
         \item Jump condition: $M^{(1)}(k;x,t)$ has the jump condition $$M^{(1)}_-(k;x,t)=M^{(1)}_+(k;x,t)V^{(1)}(k;x,t), \; k \in \Sigma^{(1)},$$
         where
         \begin{equation}\label{2V1}
            V^{(1)}(k;x,t)=\begin{cases}
             \left(\begin{array}{cc}
                 1 & -\overline{r(k)}     e^{-2it g(k)}\\
                 -r(k)e^{2it g(k)} & 1+|r(k)|^2
             \end{array}\right),\; k \in \mathbb{R},\\
         \left(\begin{array}{cc}
            e^{-2it g(k)} & 0\\
            f(k) & e^{2it g(k)}
         \end{array}\right),\; k \in \gamma_g,\\
         \left(\begin{array}{cc}
            e^{-2it g(k)} & - \overline{f(\bar{k})}\\
            0 & e^{2it g(k)}
         \end{array}\right),\; k \in \bar{\gamma}_g,
     \end{cases}
     \end{equation}
     with $f(k)$ being given in (\ref{f}).
         \item Asymptotic behavior:
         \begin{align*}
                 M^{(1)}(k;x,t)=I+\mathcal{O}(k^{-1}),	\quad  k \to  \infty.
         \end{align*}
     \end{itemize}
 \end{RHP}
 For the same reason in the first transformation applied in the Z-M region, we introduce the second transformation here
\begin{align*}
     M^{(2)}(k;x,t)=M^{(1)}(k;x,t) \delta^{-\sigma_3}(k),
\end{align*}
where
\begin{align*}
    \delta(k)&=\exp \left[ \frac{1}{2\pi i} \int_{P_1(\xi)}^{P_3(\xi)} \frac{ \log (1 +|r(s)|^2)}{s-k} \mathrm{d} s\right]\\
    & =\left(\frac{k-P_3(\xi)}{k-P_1(\xi)}\right)^{-iv(P_3(\xi))} \exp \left[ \frac{1}{2\pi i} \int_{P_1(\xi)}^{P_3(\xi)} \frac{ \log \left( \frac{1+|r(s)|^2}{1+|r(P_3(\xi))|^2}\right)}{s-k} \mathrm{d} s\right]
\end{align*}
and \begin{align*}
    v(k)=\frac{1}{2\pi} \log \left( 1+|r(k)|^2 \right).
\end{align*}
In addition, $\delta_-=\delta_+\left( 1+|r|^2 \right)^{-1}$ for $k \in \left[P_1(\xi), P_3(\xi)\right]$.
$M^{(2)}(k;x,t)$ satisfies the following RHP:

\begin{RHP}
  Find a matrix-valued function $M^{(2)}(k;x,t)$ which satisfies
\begin{itemize}
    \item Analyticity: $M^{(2)}(k;x,t)$ is analytic in $\mathbb{C}\setminus \Sigma^{(2)}$, where $\Sigma^{(2)}=\Sigma^{(1)}$.
    \item Jump condition: $M^{(2)}_-(k;x,t)=M^{(2)}_+(k;x,t)V^{(2)}(k;x,t), \; k \in \Sigma^{(2)}$,
    where
    \begin{equation}
       V^{(2)}(k;x,t)= \begin{cases}\left(\begin{array}{cc}
            1 & -\overline{r(k)} \delta^2(k)    e^{-2it g(k)}\\
            -r(k) \delta^{-2}(k) e^{2it g(k)} & 1+|r(k)|^2
        \end{array}\right),\; k \in \mathbb{R} \setminus \left( P_1(\xi),P_3(\xi)\right),\\
      \left(\begin{array}{cc}
            1+|r(k)|^2 & -\frac{\overline{r(k)}}{1+|r(k)|^2  }  \delta^2_+(k)    e^{-2it g(k)}\\
            -\frac{r(k)}{1+|r(k)|^2} \delta^{-2}_-(k) e^{2it g(k)} & 1
        \end{array}\right),\; k \in \left( P_1(\xi),P_3(\xi)\right),\\
    \left(\begin{array}{cc}
       e^{-2it g(k)} & 0\\
       f(k) & e^{2it g(k)}
    \end{array}\right),\; k \in \gamma_g,\\
    \left(\begin{array}{cc}
       e^{-2it g(k)} & - \overline{f(\bar{k})}\\
       0 & e^{2it g(k)}
    \end{array}\right),\; k \in \bar{\gamma}_g.
\end{cases}
\end{equation}
\item Asymptotic behavior:
    \begin{align*}
            M^{(2)}(k;x,t)=I+\mathcal{O}(k^{-1}),	\quad  k \to  \infty.
    \end{align*}
\end{itemize}
\end{RHP}
Then the subsequent transformation is
\begin{align*}
    M^{(3)}(k;x,t)=M^{(2)}(k;x,t)G^{(2)}(k;x,t),
\end{align*}
where
\begin{align*}
    G^{(2)}(k;x,t)=\begin{cases}
        \left(\begin{array}{cc}
            1 &0\\
            -r(k) \delta^{-2}(k) e^{2it g(k)} & 1
        \end{array}\right),\; k \in \Omega_1 \cup \Omega_2,\\
        \left(\begin{array}{cc}
            1 & \overline{r(\bar{k})} \delta^2(k)    e^{-2it g(k)}\\
           0 & 1
        \end{array}\right),\; k \in \Omega_3 \cup \Omega_4,\\
        \left(\begin{array}{cc}
            1 & -\frac{\overline{r(\bar{k})}}{1+|r(k)|^2  }  \delta^2_+(k)    e^{-2it g(k)}\\
           0 & 1
        \end{array}\right),\; k \in \Omega_5,\\
        \left(\begin{array}{cc}
            1 & 0\\
            \frac{r(k)}{1+|r(k)|^2} \delta^{-2}_-(k) e^{2it g(k)} & 1
        \end{array}\right),\; k \in \Omega_6,\\
        \left(\begin{array}{cc}
            1 &0\\
            0& 1
        \end{array}\right),\; k \in \Omega_7 \cup \Omega_8.
    \end{cases}
\end{align*}
This transformation gives the following RHP:

\begin{figure}
	\begin{center}
    \begin{tikzpicture}
    \draw [dashed](-4,0)--(4,0);
    \draw [dashed](0,-2.2)--(0,2.2);
     \draw [->](-2,0)--(-1,0);
     \draw [->](-1,0)--(0.6,0);
     \draw [](0.6,0)--(1.5,0);

    \draw [](0.8,0) arc(0:30:2);
    \draw [->](0.532,1) arc(30:15:2);
     \draw [->](0.8,0) arc(360:345:2);
    \draw [](0.8,0) arc(360:330:2);

      \draw [](3.6,2)--(2.55,1);
      \draw [->](1.5,0) --(2.55,1);
      \draw [](3.6,-2)--(2.55,-1);
      \draw [->](1.5,0) --(2.55,-1);

      \draw [->](-3.6,2)--(-2.8,1);
      \draw [](-2.8,1) --(-2,0);
      \draw [->](-3.6,-2)--(-2.8,-1);
       \draw [](-2.8,-1) --(-2,0);
       \draw[->] (-2,0) arc (180:270:1.75);
        \draw[] (1.5,0) arc (0:360:1.75);
       \draw[->] (-2,0) arc (180:90:1.75);




	\node[scale=1] at (3,0.6) {$\Omega_1$};
	\node[scale=1] at (-3,0.6) {$\Omega_2$};
	\node[scale=1] at (-0.2,-0.6) {$\Omega_6$};	
	\node[scale=1] at (3,-0.6) {$\Omega_4$};
	\node[scale=1] at (-0.2,0.6) {$\Omega_5$};
	\node[scale=1] at (-3,-0.6) {$\Omega_3$};

\node[scale=1] at (3,1.8) {$L_1$};
\node[scale=1] at (-3,1.8) {$L_2$};
\node[scale=1] at (-0.2,-2) {$L_6$};	
\node[scale=1] at (3,-1.8) {$L_4$};
\node[scale=1] at (-0.2,1.92) {$L_5$};
\node[scale=1] at (-3,-1.8) {$L_3$};  	
\node[scale=1] at (0.9,0.9) {$\gamma_g$};
\node[scale=1] at (0.9,-0.9) {$\bar{\gamma}_g$}; 	

	   	\node[scale=1] at (-2.4,-0.2) {$P_1$};
	 	\node[scale=1] at (1,-0.2) {$P_2$};
	    \node[scale=1] at (1.9,-0.2) {$P_3$};
    \end{tikzpicture}
    \end{center}
\caption{ \small The contour $ \Sigma^{(3)}$ for $M^{(3)}(k;x,t)$.}
\label{rj4}
\end{figure}
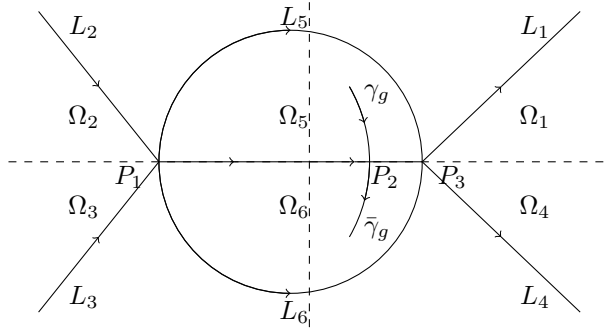

\begin{RHP}  Find a matrix-valued function $M^{(3)}(k;x,t)$ which satisfies
\begin{itemize}
    \item Analyticity: $M^{(3)}(k;x,t)$ is analytic in $\mathbb{C}\setminus \Sigma^{(3)}$, where $\Sigma^{(3)}= \cup_{i=1}^6 L_i$. See Figure \ref{rj4}.
    \item Jump condition: $M^{(3)}_-(k;x,t)=M^{(3)}_+(k;x,t)V^{(3)}(k;x,t), \; k \in \Sigma^{(3)}$,
    where
    \begin{equation}
       V^{(2)}(k;x,t)= \begin{cases}\left(\begin{array}{cc}
            1 &0\\
            -r(k) \delta^{-2}(k) e^{2it g(k)} & 1
        \end{array}\right),\; k \in L_1 \cup L_2,\\
        \left(\begin{array}{cc}
            1 & \overline{r(\bar{k})} \delta^2(k)    e^{-2it g(k)}\\
           0 & 1
        \end{array}\right),\; k \in L_3 \cup L_4,\\
       \left(\begin{array}{cc}
            1 & -\frac{\overline{r(\bar{k})}}{1+|r(k)|^2  }  \delta^2_+(k)    e^{-2it g(k)}\\
           0 & 1
        \end{array}\right),\; k \in L_5,\\
        \left(\begin{array}{cc}
           1 & 0\\
           -\frac{r(k)}{1+|r(k)|^2}  \delta^{-2}_-(k) e^{2it g(k)}& 1
        \end{array}\right),\; k \in L_6,\\
    \left(\begin{array}{cc}
       0 & -f^{-1}(k) \delta^2(k)\\
       f(k) \delta^{-2}(k) & 0
    \end{array}\right),\; k \in \gamma_g,\\
    \left(\begin{array}{cc}
      0 & - \overline{f(\bar{k})} \delta^2(k)\\
       \left( \overline{f(\bar{k})} \right)^{-1}\delta^{-2}(k) & 0
    \end{array}\right),\; k \in \bar{\gamma}_g,
\end{cases}
\end{equation}
after using the relations
\begin{align*}
    1-\overline{a_-(\bar{k})b_-(\bar{k})} f(k)=0,\; 1+\overline{f(\bar{k})} a_+(k)b_+(k)=0.
\end{align*}
\item Asymptotic behavior:
    \begin{align*}
            M^{(3)}(k;x,t)=I+\mathcal{O}(k^{-1}),	\quad  k \to  \infty.
    \end{align*}
\end{itemize}
\end{RHP}
 We notice that $V^{(3)}(k;x,t)=I+\mathcal{O}\left(e^{-\epsilon t}\right)$ as $t \to \infty$ for $k \in L_i(i=1,2,\cdots,6)$ with the exception of small neighborhoods of the stationary points $P_1(\xi)$ and $P_3(\xi)$.
 Hence,  the jump matrix for $k \in \gamma_g \cup \bar{\gamma}_g$ is the main contribution of the asymptotics and it can be factorized as
 \begin{align}\label{2V3}
    V^{(3)}(k;x,t)=  \left(\begin{array}{cc}
        F_+^{-1}(k) & 0\\
        0 & F_+(k)
     \end{array}\right) \left(\begin{array}{cc}
        0 & i\\
        i & 0
     \end{array}\right)\left(\begin{array}{cc}
        F_-(k) & 0\\
        0 & F_-^{-1}(k)
     \end{array}\right),
 \end{align}
 which takes place if
 \begin{equation*}
    F_-(k)F_+(k)= \begin{cases}
       -i f(k) \delta^{-2}(k),\; k \in \gamma_g,\\
       i f^{-1}(k) \delta^{-2}(k),\; k \in \bar{\gamma}_g.
    \end{cases}
\end{equation*}

Therefore, we arrive at the following scalar RHP:

\begin{RHP}  Find a scalar function $F(k)$ such that
 \begin{itemize}
     \item Analyticity: $F(k)$ is analytic in $\mathbb{C}\setminus \{ \gamma_g \cup \bar{\gamma}_g \}$.
    \item Boundedness: $F(k)$ is bounded at infinity.
     \item Jump condition: \begin{equation*}
         F_-(k)F_+(k)= \begin{cases}
            -i f(k) \delta^{-2}(k) = a^{-1}_-(k)a^{-1}_+(k) \delta^{-2}(k),\; k \in \gamma_g,\\
            i f^{-1}(k) \delta^{-2}(k) = \overline{a_+(\bar{k})a_-(\bar{k})}\delta^{-2}(k),\; k \in \bar{\gamma}_g.
         \end{cases}
     \end{equation*}
\end{itemize}
\end{RHP}
Let
\begin{equation*}
    H(k)=\begin{cases}
        a(k)F(k),\; k \in \mathbb{C}^+ \backslash \gamma_g,\\
        \frac{F(k)}{a(k)},\; k \in \mathbb{C}^- \backslash \bar{\gamma}_g.
    \end{cases}
\end{equation*}
Then we obtain
\begin{equation*}
    \left[ \frac{\log H(k)}{X(k)}\right]_+-\left[ \frac{\log H(k)}{X(k)}\right]_- = \begin{cases}
       \frac{ \log \delta^{-2}(k)}{X_+(k)},\; k\in \gamma_g \cup \bar{\gamma}_g,\\
       \frac{\log a^2(k)}{X(k)},\; k \in \mathbb{R}.
    \end{cases}
\end{equation*}
Using the Sokhotski-Plemelj formula, we have
\begin{equation}
    H(k)=\exp \left\{ \frac{X(k)}{2 \pi i}  \left[  \int_{\gamma_g \cup \bar{\gamma}_g} \frac{\log \delta^{-2}(s)}{s-k} \frac{\mathrm{d} s}{X_+(s)}  + \int_{\mathbb{R}} \frac{\log a^2(s)}{s-k} \frac{\mathrm{d}s}{X(s)} \right] \right\}.
\end{equation}
Since $a(\infty)=1$, $F(\infty)=H(\infty)=e^{i\phi(\xi)}$ with
\begin{align}\label{phi}
    \phi(\xi)=\frac{1}{2\pi}  \left[ \int_{\gamma_g \cup \bar{\gamma}_g} \frac{\log \delta^{-2}(s)}{X_+(s)} \mathrm{d} s  + \int_{\mathbb{R}} \frac{\log |a(s)|^2}{X(s)} \mathrm{d} s \right].
\end{align}
The factorization (\ref{2V3}) suggests the next step
\begin{align*}
    M^{(4)}(k;x,t)=F^{\sigma_3}(\infty)M^{(3)}(k;x,t) F^{-\sigma_3}(k),
\end{align*}
which satisfies the RHP as follows:

\begin{RHP} Find a matrix function $M^{(4)}(k;x,t)$ such that
\begin{itemize}
    \item Analyticity: $M^{(4)}(k;x,t)$ is analytic in $\mathbb{C}\backslash \Sigma^{(4)}$, where $\Sigma^{(4)}=\cup_{i=1}^4 L_i \cup \gamma_g \cup \bar{\gamma}_g$.
    \item Jump condition: $M^{(4)}_-(k;x,t)=M^{(4)}_+(k;x,t)V^{(4)}(k;x,t), \; k \in \Sigma^{(4)}$,
    where
    \begin{equation}
       V^{(4)}(k;x,t)= \begin{cases}\left(\begin{array}{cc}
            0&i\\
           i & 0
        \end{array}\right),\; k \in \gamma_g \cup \bar{\gamma}_g,\\
     I +\mathcal{O}\left(e^{-\epsilon t} \right),\; k \in \cup_{i=1}^4 L_i.
\end{cases}
\end{equation}
\item Asymptotic behavior:
    \begin{align*}
            M^{(4)}(k;x,t)=I+\mathcal{O}(k^{-1}),	\quad  k \to  \infty.
    \end{align*}
\end{itemize}
\end{RHP}
Finally, we match $M^{(4)}(k;x,t)$ to $M^{(mod)}(k;x,t)$ by
\begin{align*}
    M^{(4)}(k;x,t)=E(k;x,t)M^{(mod)}(k;x,t)
\end{align*}
where $E(k;x,t)$ is the error function and $M^{(mod)}(k;x,t)$ solves the model RHP:

\begin{RHP}\label{RHP311}  Find a matrix function $M^{(mod)}(k;x,t)$ such that
\begin{itemize}
    \item Analyticity: $M^{(mod)}(k;x,t)$ is analytic in $\mathbb{C}\backslash \{\gamma_g \cup \bar{\gamma}_g\}$.
    \item Jump condition: $M^{(mod)}_-(k;x,t)=M^{(mod)}_+(k;x,t) \left(\begin{array}{cc}
            0&i\\
           i & 0
        \end{array}\right),\; k \in \gamma_g \cup \bar{\gamma}_g.$
\item Asymptotic behavior: $M^{(mod)}(k;x,t)=I+\mathcal{O}(k^{-1}),	\quad  k \to  \infty.$
\end{itemize}
\end{RHP}
The solution of RHP \ref{RHP311} can be given explicitly by
\begin{equation*}
    M^{(mod)}(k;x,t)= \frac{1}{2 }\left(\begin{array}{cc}
        \varphi(k)+\frac{1}{ \varphi(k)} &  \varphi(k)-\frac{1}{ \varphi(k)}\\
        \varphi(k)-\frac{1}{ \varphi(k)} & \varphi(k)+\frac{1}{ \varphi(k)}
        \end{array}\right),
\end{equation*}
where $\varphi(k)$ has been defined in (\ref{var}).
In addition, we can find $E(k;x,t)=I+\mathcal{O}(k^{-1/2})$.
Therefore,  taking into account the inverse transformations of the four transformations above, we have
\begin{align*}
    u(x,t)=2i \lim_{k\to\infty} \left[ k M(k;x,t)\right]_{12}= 2i e^{2ig(\infty)t}m^{(mod)}_{12}(x,t)F^{-2}(\infty) +\mathcal{O}(t^{-1/2}),
\end{align*}
where $g(\infty)=\frac{B^3}{4}-\frac{3A^2B}{2}+3\xi B$ and $m^{(mod)}_{12}(x,t)==\lim_{k \to \infty}\left[ k M(k;x,t)\right]_{12}=-\frac{iA}{2}$. After simplification we can obtain the result in this region.

\subsection{The slow decay region}
In this section,  we consider the region $\xi > \frac{A^2}{3}-\frac{B^2}{4}$ under $\xi >0$.
In this case,  the interval $[ E,-B/2 ]$ is in the region $\im \theta(k)>0$ and $[-B/2,\bar{E} ]$ is in the region $\im \theta(k)<0$.

    Considering the decay in different areas, we divide the whole area into $3$ different areas, which are $\frac{A^2}{3} - \frac{B^2}{4} < \xi < \frac{A^2}{6}$, $ \frac{A^2}{6}<\xi<A^2$ and $\xi > A^2$, and estimate the $u(x,t)$ in these regions respectively.
    We  give the proof for  $\frac{A^2}{3} - \frac{B^2}{4} < \xi < \frac{A^2}{6}$ and other cases can be proved similarly.

    If $\frac{A^2}{3} - \frac{B^2}{4} < \xi < \frac{A^2}{6}$, we define the contours $L_1 = \{ k \in \mathbb{C}: k = k_1 + i \widehat{A}(k_1),\;-\infty < k_1 < +\infty,\; \text{$\widehat{A}$ varies with $k_1$} \}$  which satisfies
   \begin{itemize}
       \item $L_1$ is in the region $\im \theta(k)>0$;
       \item $L_1$  is not crossing the interval $[E,-B/2]$ and the value $L_1|_{-B/2} > A$,
   \end{itemize}
  and $L_2= \overline{L_1}$.
   Then we define the domains $\Omega_1 = \{k\in \mathbb{C}: k = \hat{k} + i \widehat{A}\; \text{with} \; \hat{k}<k_1\; \text{and}\; \widehat{A}<\widehat{A}(k_1)  \}$, $\Omega_2 = \overline{\Omega_1}$, $\Omega_3 = \{ k\in \mathbb{C}: k = \hat{k}+ i \widehat{A}\; \text{with}\; \hat{k}>k_1\; \text{and} \; \widehat{A}>\widehat{A}(k_1)  \}$ and $\Omega_4 = \overline{\Omega_3}$. See Figure \ref{rj6}.

  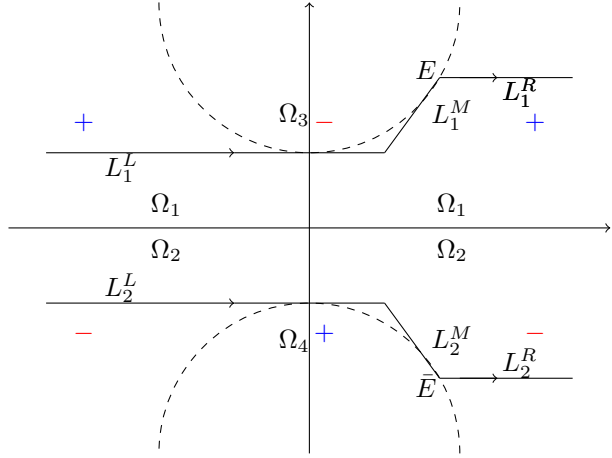
\begin{figure}
  	\begin{center}
  		\begin{tikzpicture}
  		  \draw [->](-4,0)--(4,0);
  		\draw [->](0,-3)--(0,3);
  		       \draw[dashed] (-2,3) arc (180:360:2);
  		       \draw[dashed] (2,-3) arc (0:180:2);
  		       \draw[->](-3.5,1)--(-1,1);
  		       \draw[](-1,1)--(1,1);
  		       \draw[](1,1)--(1.732,2);
  		       \draw[->](1.732,2)--(2.5,2);
  		       \draw[](2,2)--(3.5,2);

  		         \node[scale=1] at (2.8,1.8) {$L_1^R$};

  		       \node[scale=1] at (2.8,1.8) {$L_1^R$};
  		       \node[scale=1] at (2.8,-1.8) {$L_2^R$};
  		       \node[scale=1] at (-2.5,0.8) {$L_1^L$};
  		       \node[scale=1] at (-2.5,-0.8) {$L_2^L$};
  		        \node[scale=1] at (1.9,1.5) {$L_1^M$};
  		       \node[scale=1] at (1.9,-1.5) {$L_2^M$};

  		        \node[scale=1] at (1.9,0.3) {$\Omega_1$};
  		   \node[scale=1] at (-1.9,0.3) {$\Omega_1$};
  		    \node[scale=1] at (1.9,-0.3) {$\Omega_2$};
  		  \node[scale=1] at (-1.9,-0.3) {$\Omega_2$};
  		   		
  		 \node[scale=1] at (-0.2,1.5) {$\Omega_3$};
  		 \node[scale=1] at (-0.2,-1.5) {$\Omega_4$};
  		
  		   		 \node[scale=1] at (1.55,2.1) {$E$};
  		  		   		 \node[scale=1] at (1.55,-2.1) {$\bar{E}$};
  		\draw[->](-3.5,-1)--(-1,-1);
  		\draw[](-1,-1)--(1,-1);
  		\draw[](1,-1)--(1.732,-2);
  		\draw[->](1.732,-2)--(2.5,-2);
  		\draw[](2,-2)--(3.5,-2);
  		
        \node[blue,scale=1] at (3,1.4) {$+$};
  		\node[red,scale=1] at (0.2,1.4) {$-$};
  		        \node[blue,scale=1] at (-3,1.4) {$+$};
  		
  		        \node[red,scale=1] at (3,-1.4) {$-$};
  		\node[blue,scale=1] at (0.2,-1.4) {$+$};
  		  		        \node[red,scale=1] at (-3,-1.4) {$-$};
  		
  	    \end{tikzpicture}
  \end{center}
\caption{ \small The contour of  $M^{(1)}(k;x,t)$ for  $\frac{A^2}{3} - \frac{B^2}{4} < \xi < \frac{A^2}{6}$, where the dotted line is the boundary of $\im \theta =0$. The signal $+$ stands for $\im \theta(k)>0$ in the corresponding region, while the signal $-$ stands for $\im \theta(k)<0$ in the corresponding region. }
\label{rj6}
\end{figure}

   Further we make the following transformations to solve the initial problem (\ref{cmkdv})-(\ref{sl}):
   \begin{equation*}
    M^{(1)}(k;x,t)= M(k;x,t) G^{(1)}(k;x,t)
 \end{equation*}
 where
 \begin{align*}
      G^{(1)}(k;x,t) = \begin{cases}
        \left(\begin{array}{cc}
            1 & 0 \\
            -r(k)e^{2it \theta(k)} & 1
        \end{array}\right),\; k \in \Omega_1, \\
        \left(\begin{array}{cc}
            1 & \overline{r(k)}     e^{-2it \theta(k)}\\
           0 & 1
        \end{array}\right),\; k \in \Omega_2, \\
        \left(\begin{array}{cc}
            1 & 0\\
           0 & 1
        \end{array}\right),\; k \in \Omega_3 \cup \Omega_4.
      \end{cases}
 \end{align*}
$M^{(1)}(k;x,t)$ solves the following RHP:

\begin{RHP}  Find a matrix-valued function $M^{(1)}(k;x,t)$ which satisfies
 \begin{itemize}
     \item Analyticity: $M^{(1)}(k;x,t)$ is analytic in $\mathbb{C}\setminus \Sigma^{(1)}$, where $\Sigma^{(1)}= L_1 \cup L_2$.
     \item Jump condition: $M^{(1)}(k;x,t)$ has the jump condition
     $$M^{(1)}_-(k;x,t)=M^{(1)}_+(k;x,t)V^{(1)}(k;x,t), \; k \in \Sigma^{(1)},$$
     where
     \begin{equation}
        V^{(1)}(k;x,t)=\begin{cases}
         \left(\begin{array}{cc}
             1 & 0\\
             -r(k)e^{2it g(k)} & 1
         \end{array}\right),\; k \in L_1,\\
         \left(\begin{array}{cc}
            1 & -\overline{r(k)}     e^{-2it g(k)}\\
           0 & 1
        \end{array}\right),\; k \in L_2.
 \end{cases}
 \end{equation}
     \item Asymptotic behavior:
     \begin{align*}
             M^{(1)}(k;x,t)=I+\mathcal{O}(k^{-1}),	\quad  k \to  \infty.
     \end{align*}
 \end{itemize}
\end{RHP}
It is easy to prove that $M^{(1)}(k;x,t)$ has the following unique solution
\begin{align*}
    M^{(1)}(k;x,t) = I + \frac{1}{2\pi i } \int_{L_1 \cup L_2} \frac{\mu(s)\left( V^{(1)}(s)- I\right)}{s-k} \mathrm{d}s,
\end{align*}
where $\left( I- C_w\right)\mu = I$.  We assume that as $k \to \infty$, $M^{(1)}(k;x,t)= I +\frac{M^{(1)}_1(x,t)}{k} + O(k^{-2})$, then we can find
\begin{align*}
   & M^{(1)}_1 = -\frac{1}{2\pi i }\int_{L_1 \cup L_2} \mu(s) \left(V^{(1)}(s)-I\right) \mathrm{d}s \\
  &  = -\frac{1}{2\pi i }\int_{L_1 \cup L_2}  \left(V^{(1)}(s)-I\right) \mathrm{d}s -\frac{1}{2\pi i }\int_{L_1 \cup L_2} \left(\mu(s)-I \right) \left(V^{(1)}(s)-I\right) \mathrm{d}s.
\end{align*}
For convenience, we take the following special line as an example, other cases can be proved in a similar way.
Define $k_2$ as the  real part of the intersection of the tangent line of $ \im \theta'(k)=0 $ through the point $\left(-\frac{B}{2},  \sqrt{3 \left(\xi + \frac{B^2}{4} \right)}\right)$ with the line $\{k:k=k_1+i \sqrt{6\xi},\;  k_1 \in \mathbb{R} \}$.
Let \begin{align*}
    L_1 = L_1^L + L_1^M + L_1^R
\end{align*}
where
$$L_1^L = \left\{k: k = k_1 + i \sqrt{6\xi}, \; k_1 <k_3 \right\}, \ \  L^R_1 =\left\{ k_1 + i \sqrt{3 \left(\xi + \frac{B^2}{4} \right)}, \; k_1 > -\frac{B}{2} \right\}$$
 and $L_1^M$  is the line connecting $(k_3, i \sqrt{6\xi})$ and $E$ with $k_2<k_3<E $. See Figure \ref{rj6}.
Then we have
\begin{align*}
   \left| \frac{1}{2\pi i }\int_{L_1^L} \left( V^{(1)}(s)-I  \right)\mathrm{d}s  \right| &< \left| \frac{1}{2\pi}  e^{f(\sqrt{6\xi})}  \int_{L_1^L}  e^{-12\sqrt{6\xi}\re^2s}\mathrm{d}s \right| \lesssim  t^{-\frac{1}{2}} e^{12 \sqrt{6} t\xi^{\frac{3}{2}} } \\
   \left| \frac{1}{2\pi i }\int_{L_1^R} \left( V^{(1)}(s)-I  \right)\mathrm{d}s  \right| &< \left| \frac{1}{2\pi} e^{f\left(\sqrt{3\left( \xi +\frac{B^2}{4}\right)} \right)} \int_{L_1^R}  e^{\left(12\sqrt{3\left( \xi +\frac{B^2}{4} \right)}t \right) \re^2s}\mathrm{d}s \right|\\
   &\lesssim t^{-\frac{1}{2}}  e^{12 \sqrt{3 \left(\xi +\frac{B^2}{4} \right)} \left( \xi +\frac{B^2}{4}+1\right) t},
\end{align*}
where $f(x)=4t(x^3 - 3\xi x)$.
Similarly,  \begin{align*}
    \left| \frac{1}{2\pi i }\int_{L_1^M} \left( V^{(1)}(s)-I  \right)\mathrm{d}s  \right| &\lesssim  \left|\int_{k_2}^{-\frac{B}{2}}    \int_{\sqrt{6\xi}}^{\sqrt{3 \left( \xi + \frac{B^2}{4} \right)}} e^{f(y)-12 ty x^2} \mathrm{d} x  \mathrm{d} y  \right| \\
    &\lesssim t^{-\frac{1}{2}}   e^{12 \sqrt{3 \left(\xi +\frac{B^2}{4} \right)} \left( \xi +\frac{B^2}{4}+1\right) t}.
\end{align*}
The second integral $\frac{1}{2\pi i }\int_{L_1 \cup L_2} \left(\mu(s)-I \right) \left(V^{(1)}(s)-I\right) \mathrm{d}s$ can be estimated by similar method.
Also we can prove the cases for $ \frac{A^2}{6}<\xi<A^2$ and $\xi > A^2$ in a similar way, which give us the asymptotic behavior in the slow decay region.


\begin{thebibliography}{10}
	
 \bibitem{C1}
    R. F. Bikbaev,
    Structure of a shock wave in the theory of the Korteweg-de Vries equation,
    Phys. Lett. A 141(1989), 289-293.

    \bibitem{C2}
    A. V. Gurevich and L. P. Pitaevskii,
    Decay of initial discontinuity in the Korteweg-de Vries equation,
    JETP Letters 17(1973), 193-195.

    \bibitem{C3}
    E. Ya. Khruslov,
    Decay of initial steplike discontinuity in the Korteweg-de Vries equation,
    JETP Letters 2(1975)1, 217-218.

    \bibitem{C4}
    V. Yu. Novokshenov,
    Time asymptotics for soliton equations in problems with step initial conditions,
    J. Math.
    Sci. (N.Y.) 125(2005), 717-749.

    \bibitem{C5}
    G. B. Whitham,
     Linear and Nonlinear Waves,
     Pure and Applied Mathematics. New York: Wiley-Interscience, Wiley, 1974.

    \bibitem{C6}
     R. F. Bikbaev,
     Saturation of modulational instability via complex Whitham deformations: nonlinear Schr$\ddot{\text{o}}$dinger equation,
     J. Math. Sci. 85(1997), 1596-1604.

    \bibitem{C7}
    R. F. Bikbaev,
    Complex Whitham-deformations in problems with integrable instability,
    J. Math. Sci. 88(1998), 162-184.

     \bibitem{C8}
      V. Yu. Novokshenov,
     Temporal asymptotics for soliton equations in problems with step initial conditions,
      J. Math. Sci. 125(2005), 717-749.

    \bibitem{C9}
	 P. A. Deift, and X. Zhou.,
	A steepest descent method for oscillatory Riemann-Hilbert problems. Asymptotics for the MKdV equation,
	 Ann. Math. 137(1993), 295-368.
	
	 \bibitem{C10}
	 R. Buckingham, S. Venakides,
	 Long-time asymptotics of the nonlinear Schr$\ddot{\text{o}}$dinger equation shock problem,
	 Commun. Pure Appl. Math. 60(2007), 1349-414.
	
	 \bibitem{N1}
	 G. Biondini, E. Fagerstrom, B. Prinari,
	 Inverse scattering transform for the defocusing nonlinear Schr$\ddot{\text{o}}$dinger equation with fully asymmetric non-zero boundary conditions,
	 Phys. D 333(2016), 117-136.
	
	 \bibitem{N2}
	 G. Biondini, B. Prinari,
	 On the spectrum of the Dirac operator and the existence of discrete eigenvalues for the defocusing nonlinear Schr$\ddot{\text{o}}$dinger equation,
	 Stud. Appl. Math. 132(2014), 138-159.
	
	 \bibitem{N3}
	 F. Demontis, B. Prinari, C. van der Mee, F. Vitale,
	 The inverse scattering transform for the defocusing nonlinear Schr$\ddot{\text{o}}$dinger equations with nonzero boundary conditions,
	 Stud. Appl. Math. 131(2013), 1-40.
	
	 \bibitem{N4}
	 G. Biondini, G. Kova$\check{\text{c}}$i$\check{\text{c}}$,
	 Inverse scattering transform for the focusing nonlinear Schr$\ddot{\text{o}}$dinger equation with nonzero boundary conditions,
	 J. Math. Phys. 55(2014), 031506.
	
	 \bibitem{N5}
	 G. Biondini, D. Mantzavinos,
	 Long-time asymptotics for the focusing nonlinear Schr$\ddot{\text{o}}$dinger equation with nonzero boundary conditions at infinity and asymptotic stage of modulational instability,
	 Comm. Pure Appl. Math. 70((2017), 2300-2365.
	
	 \bibitem{N6}
	 A. Boutet de Monvel, V. P. Kotlyarov, D. Shepelsky,
	 Focusing NLS equation: long-time dynamics of step-like initial data,
	 Int. Math. Res. Not. IMRN 7(2011), 1613-1653.



    	\bibitem{A1}
	O. B. Gorbacheva, L. A. Ostrovsky,
	Nonlinear vector waves in a mechanical model of a molecular chain,
	\newblock{Phys. D.} 8(1983), 223-228.
	
	\bibitem{A2}
	S. Erbay, E. S. Suhubi,
	Nonlinear wave propagation in micropolar media-I. The general theory,
	\newblock{Int. J. Eng. Sci.} 27(1989), 895-914.
	
	\bibitem{A3}
	J. A. Reyes, P. Palffy-Muhoray,
	Nonlinear Schr$\ddot{\text{o}}$dinger equation in nematic liquid crystals,
	\newblock{Phys. Rev. E} 58(1998), 5855.
	
	\bibitem{A4}
	R. F. Rodriguez, J. A. Reyes, A. Espinosa-Ceron, J. Fujioka, B. A. Malomed,
	Standard and embedded solitons in nematic optical fibers,
	\newblock{Phys. Rev. E} 68(2003), 036606.
	
	\bibitem{A5}
	S. Erbay, E. S. Suhubi,
	Nonlinear wave propagation in micropolar media-II. Special cases, solitary waves and Painleve analysis,
    \newblock{Int. J. Eng. Sci.} 27(1989), 915-919.


		 \bibitem{M1}
     E. Fan, L. Chao,
    Soliton solutions for the new complex version of a coupled KdV equation and a coupled MKdV equation,
     Phys. Lett. A 285(2001), 373-376.

	
	 \bibitem{M2}
	  A.-M. Wazwaz,
	  The tanh and the sine-cosine methods for the complex modified KdV and the generalized KdV equations,
	  Comput. Math. Appl. 49(2005), 1101-1112.
	


	 \bibitem{M3}
	 Q. Zha,
	 Nth-order rogue wave solutions of the complex modified Korteweg-de Vries equation,
	 Phys. Scr. 87(2013), 065401.
	
	 \bibitem{M4}
	 J. He, L. Wang, L. Li, K. Porsezian, R. Erdelyi,
	 Few-cycle optical rogue waves: complex modified Korteweg-de Vries equation,
	 Phys. Rev. E 89(2014), 062917.
	
	 \bibitem{M5}
	 Y. F. Liu, R. Guo, H. Li,
	 Breathers and localized solutions of complex modified Korteweg-de Vries equation,
	  Mod. Phys. Lett. B 29(2015), 1550129.
	
	 \bibitem{M6}
	  C. Liu, Y. Ren,  Z. Y. Yang, W. L. Yang,
	  Superregular breathers in a complex modified Korteweg-de Vries system,
	   Chaos 27(2017), 083120.
	
	 \bibitem{M7}
	 E. P. Zhidkov, I. D. Iliev, K. P. Kirchev,
	 Stability of a solution of the form of a solitary wave for a nonlinear complex modified Korteweg-de Vries equation,
	  Sib. Math. J. 26(1985), 810-817.
	
	 \bibitem{M8}
	 S. Hakkaev, I. D. Iliev, K. P. Kirchev,
	 Stability of periodic traveling waves for complex modified Korteweg-deVries equation,
	  J. Differ. Equations 248(2010), 2608-2627.
	
	 \bibitem{M9}
	 A. G. Johnpillai, A. H. Kara, A. Biswas,
	 Exact group invariant solutions and conservation laws of the complex modified Korteweg-de Vries equation,
	 Z. Naturforsch. A 68(2013), 510-514.
	
	 \bibitem{M10}
	 H. Q. Zhao, G. F. Yu,
	 Discrete rational and breather solution in the spatial discrete complex modified Korteweg-de Vries equation and continuous counterparts,
	  Chaos 27(2017), 043113.


	 \bibitem{M11}
	 L. Y. Ma , S. F. Shen,  Z. N. Zhu,
	 Soliton solution and gauge equivalence for an integrable nonlocal complex modified Korteweg-de Vries equation,
	  J. Math. Phys. 58(2017), 103501.
	
	  \bibitem{CT1}
	  Y. S. Zhang, X. X. Tao, S. W. Xu,
	  The bound-state soliton solutions of the complex modified KdV equation,
	  Inverse Probl. 36(2020), 065003.
	
	   \bibitem{VA1}
	  V. P. Kotlyarov, A. Minakov,
	  \newblock Riemann-Hilbert problem to the modified Korteveg-deries equation: Long-time dynamics of the steplike initial data,
	  \newblock{J. Math. Phys.} 51 (2010).
	
	  \bibitem{CT2}
	    A. Boutet de Monvel,   V. P. Kotlyarov, D. Shepelsky,
	   Focusing NLS Equation: Long-Time Dynamics of Step-Like Initial Data,
	   Int. Math. Res. Not. 7(2011),1613-1653.
	
	
	
	

	   \bibitem{xu}
	   J. Xu, E. G. Fan, Y. Chen,
	   Long-time Asymptotic for the Derivative Nonlinear  Schr$\ddot{\text{o}}$dinger Equation with Step-like Initial Value,
	   Math. Phys. Anal. Geom. 16(2013), 253-288.
	
    \bibitem{GM}
    T. Grava, A. Minakov,
    On the long-time asymptotic behavior of the modified Korteweg-de Vries equation with step-like initial data,
    \newblock{SIAM J. Math. Anal.} 52(2020), 5892-5993.

    \bibitem{WF}
    Z. Y. Wang, E. G. Fan,
    \newblock The  defocusing  NLS equation with  nonzero background: Large-time  asymptotics   in the solitonless  region,   \newblock{J. Differ. Equ.} 336(2022), 334-373.

   \bibitem{D1}
    P. Deift, S. Venakides, X. Zhou,
    \newblock The collisionless shock region for the long-time behavior of solutions of the KdV equation,
    \newblock{ Commun. Pure Appl. Math.} 47 (1994), 199-206.

    \bibitem{D2}
    P. Deift, S. Venakides, X. Zhou,
    New results in small dispersion KdV by an extension of the steepest descent method for Riemann-Hilbert problems,
    Int. Math. Res. Not. 1997(1997), 286-299.


    \bibitem{M141}
     A. Boutet de Monvel, A. Its,  V. P. Kotlyarov,
    Long-Time Asymptotics for the Focusing NLS Equation with Time-Periodic Boundary Condition on the Half-Line,
    \newblock{Comm. Math. Phys.} 290(2009), 479-522.
	



\end{thebibliography}
\end{document}